\newtheorem{theorem}{Theorem}
\newtheorem{lemma}{Lemma}
\numberwithin{theorem}{section} \numberwithin{equation}{section}
\numberwithin{lemma}{section}
\begin{document}

\title{Exact convergence order of the $L_r$-quantization error for Markov-type measures}

\author{Sanguo Zhu, Youming Zhou, Yongjian Sheng}
\address{School of Mathematics and Physics, Jiangsu University of Technology,\\
Changzhou 213001, China}\email{sgzhu@jsut.edu.cn}

\begin{abstract}
Let $E$ be a graph-directed set associated with a di-graph $G$. Let $\mu$ be a Markov-type measure on $E$. Assuming a separation condition for $E$, we determine the exact convergence order of the $L_r$-quantization error for $\mu$. This result provides us with accurate information on the asymptotics of the quantization error, especially when the quantization coefficient is infinite.
\end{abstract}
\keywords{Markov-type measures, graph directed fractals, quantization error, convergence order, reducible transition matrix.} \subjclass[2000]{28A80, 28A78} \maketitle

\section{Introduction}

The quantization problem for probability measures consists in nonlinear approximation of a given probability measure with discrete measures in $L_r$-metrics.  We refer to \cite{GL:00} for mathematical foundations of this theory and \cite{BW:82,GN:98,Za:63} for its deep background in information theory and engineering technology. One may see \cite{PK:01} for some more related theoretical results.

In the present paper, we further study the asymptotic quantization errors for Markov-type measures supported on graph-directed fractals. For related results on this topic, see \cite{KZ:15,LJL:01}.

Let $P=(p_{ij})_{N\times N}$ be a row-stochastic matrix, i.e,
$p_{ij}\geq0,1\leq i,j\leq N$, and $\sum_{j=1}^{N}p_{ij}=1,1\leq i\leq N$.
We assume
\begin{equation}
{\rm card}(\{1\leq j\leq N:p_{ij}>0\})\geq2\;\;{\rm for\; all}\;\;1\leq i\leq N.\label{cardpij>0}
\end{equation}
Let $\theta$ denote the empty word and set $\Omega_{0}:=\{\theta\}$. Write
\begin{eqnarray*}
&& \Omega_{1}:=\{1,\ldots N\};\;
\Omega_{k}:=\bigg\{\sigma\in\Omega_{1}^{k}:\prod_{h=1}^{k-1}p_{\sigma_{h}\sigma_{h+1}}>0\bigg\},\; k\geq2;\\
 && \Omega^{*}:=\bigcup_{k\geq0}\Omega_{k},\;\Omega_{\infty}:=\bigg\{\sigma\in\Omega_{1}^{\mathbb{N}}:p_{\sigma_{h}\sigma_{h+1}}>0\;\;{\rm for\; all}\;\; h\geq1\bigg\}.
\end{eqnarray*}
We define $|\sigma|:=k$
for $\sigma\in\Omega_{k}$ and $|\theta|:=0$. For $\sigma=\sigma_{1}\ldots\sigma_{n}\in\Omega_n$
with $n\geq k$ or $\sigma\in\Omega_{\infty}$, we write $\sigma|_{k}:=\sigma_{1}\ldots\sigma_{k}$.
If $\sigma,\omega\in\Omega^{*}$ and $(\sigma_{|\sigma|},\omega_{1})\in\Omega_{2}$,
we define
\[
\sigma\ast\omega=\sigma_{1}\sigma_{2}\ldots\sigma_{|\sigma|}\omega_{1}\ldots\omega_{|\omega|}\in\Omega_{|\sigma|+|\omega|}.
\]
Let $J_{i},1\leq i\leq N$, be non-empty compact subsets of $\mathbb{R}^{t}$
with $J_{i}=\overline{{\rm int}(J_{i})}$ for all $1\leq i\leq N$,
where $\overline{B}$ and ${\rm int}(B)$ respectively denote the
closure and interior in $\mathbb{R}^{t}$ of a set $B\subset\mathbb{R}^{t}$. Let $|A|$ denotes the diameter of a set $A\subset\mathbb{R}^{t}$.
Without loss of generality, we assume that
\[
|J_{i}|=1\;\;{\rm for\; all}\;\;1\leq i\leq N.
\]
Let $(c_{ij})_{N\times N}$ be a non-negative matrix such that $c_{ij}\in[0,1)$, and $c_{ij}>0$
if and only if $p_{ij}>0$  for all $1\leq i,j\leq N$.

We call $J_{i},1\leq i\leq N$,
cylinder sets of order one. For each $1\leq i\leq N$, let $J_{ij},\;(i,j)\in\Omega_{2}$,
be non-overlapping subsets of $J_{i}$ such that $J_{ij}$ is geometrically
similar to $J_{j}$ and
\[
\frac{|J_{ij}|}{|J_{j}|}=c_{ij},\;(i,j)\in\Omega_{2}.
\]
We call these sets cylinder sets of order two.

Assume that
cylinder sets $J_{\sigma},\sigma\in\Omega_{k}$, of order $k$ are defined. Let $J_{\sigma\ast i_{k+1}}$ with $\sigma\ast i_{k+1}\in\Omega_{k+1}$,
be non-overlapping subsets of $J_{\sigma}$ such that $J_{\sigma\ast i_{k+1}}$
is geometrically similar to $J_{i_{k+1}}$ and
\[
\frac{|J_{\sigma\ast i_{k+1}}|}{|J_\sigma|}=c_{\sigma_{|\sigma|}i_{{k+1}}}.
\]

Inductively, cylinder
sets of order $k$ are determined for all $k\geq1$. Then we get a ratio-specified
fractal set $E$ satisfying
\[
E:=\bigcap_{k\geq1}\bigcup_{\sigma\in\Omega_{k}}J_{\sigma}.
\]
This type of sets can be described in terms of directed graphs, so we call $E$ a graph-directed set. Fractal properties of such sets, including Hausdorff dimension and Hausdorff measure, have been well studied by Mauldin and Williams \cite{MW:88}, Edgar and  Mauldin \cite{EM:92}.

Let $(\chi_{i})_{i=1}^{N}$ be an arbitrary probability vector with
$\min_{1\leq i\leq N}\chi_{i}>0$. By Kolmogorov consistency theorem,
there exists a unique probability measure $\widetilde{\mu}$ on $\Omega_{\infty}$
such that for every $k\geq 1$ and $\sigma=\sigma_{1}\ldots\sigma_{k}\in\Omega_{k}$, we have
\[
\widetilde{\mu}([\sigma]):=\chi_{\sigma_{1}}p_{\sigma_{1}\sigma_{2}}\cdots p_{\sigma_{k-1}\sigma_{k}};
\]
where $[\sigma]:=\{\omega\in\Omega_{\infty}:\omega|_{|\sigma|}=\sigma\}$.
Let $\pi$ denote the projection from $\Omega_{\infty}$ to $E$:

\[
\pi(\sigma)=x,\;\;{\rm with}\;\;\{x\}:=\bigcap_{k\geq1}J_{\sigma|_{k}},\;\;{\rm for}\;\;\sigma\in\Omega_{\infty}.
\]
As in \cite{KZ:15}, we assume that there exists some
constant $t\in(0,1)$ such that
\begin{equation}
d(J_{\sigma\ast i_{1}},J_{\sigma\ast i_{2}})\geq t\max\{|J_{\sigma\ast i_{1}}|,|J_{\sigma\ast i_{2}}|\}\label{g4}
\end{equation}
for every $\sigma\in\Omega^{*}$ and distinct
$i_{1},i_{2}\in\Omega_{1}$ with $(\sigma_{|\sigma|},i_{l})\in\Omega_2,l=1,2$.
We call the measure $\mu:=\widetilde{\mu}\circ\pi^{-1}$
a Markov-type measure which satisfies
\begin{eqnarray}
\mu(J_{\sigma})=\chi_{\sigma_{1}}p_{\sigma_{1}\sigma_{2}}\cdots p_{\sigma_{k-1}\sigma_{k}}\;\;{\rm for}\;\;\sigma=\sigma_{1}\ldots\sigma_{k}\in\Omega_{k}.\label{markovmeasure}
\end{eqnarray}

Next, let us recall some objects in quantization theory. We set
\[
\mathcal{D}_{n}:=\{\alpha\subset\mathbb{R}^{t}:1\leq{\rm card}(\alpha)\leq n\},\;\;n\in\mathbb{N}.
\]
Let $\nu$ be a Borel probability measure on
$\mathbb{R}^{t}$. For each $n\geq1$, the $n$th quantization error
for $\nu$ of order $r$ is defined by
\begin{eqnarray}
e_{n,r}(\nu):=\bigg(\inf_{\alpha\in\mathcal{D}_{n}}\int d(x,\alpha)^{r}d\nu(x)\bigg)^{\frac{1}{r}},\label{quanerrordef}
\end{eqnarray}
where $d(x,\alpha):=\inf_{a\in\alpha}d(x,a)$ and $d$
is the metric induced by a norm on $\mathbb{R}^{t}$.
For $r\geq1$, $e_{n,r}(\nu)$ agrees with the error in the approximation
of $\nu$ by discrete probability measures supported on at most $n$
points, in $L_{r}$-metrics \cite{GL:00}.

The upper and lower quantization dimension for $\nu$ of order $r$ as defined below are natural characterizations of the convergence rate of $e_{n,r}(\nu)$:
\begin{eqnarray*}
\overline{D}_{r}(\nu):=\limsup_{n\to\infty}\frac{\log n}{-\log e_{n,r}(\nu)},\;\underline{D}_{r}(\nu):=\liminf_{n\to\infty}\frac{\log n}{-\log e_{n,r}(\nu)}.
\end{eqnarray*}
If $\overline{D}_{r}(\nu)=\underline{D}_{r}(\nu)$, we denote the common value by $D_{r}(\nu)$ and call it the quantization dimension for $\nu$ of order $r$.

For $s>0$, we define the $s$-dimensional
upper and lower quantization coefficient for $\nu$ of order $r$
by
\[
\overline{Q}_{r}^{s}(\nu):=\limsup_{n\to\infty}n^{\frac{r}{s}}e^{r}_{n,r}(\nu),\;\;\underline{Q}_{r}^{s}(\nu):=\liminf_{n\to\infty}n^{\frac{r}{s}}e^{r}_{n,r}(\nu).
\]
The upper (lower) quantization dimension is the critical point
at which the upper (lower) quantization coefficient jumps from zero
to infinity \cite{GL:00,PK:01}. When $\underline{Q}_{r}^{s}(\nu)$ and $\overline{Q}_{r}^{s}(\nu)$ are both positive and finite, one can easily see that
$e^{r}_{n,r}(\nu)$ is of the same order as $n^{-\frac{r}{s}}$.

In the remaining part of this section, we recall some concepts regarding digraphs and some previous work in \cite{KZ:15}; then we state our main result of the present paper.

Let $G$ be a directed graph with
vertices $1,2,\ldots,N$; we assume that there exists exactly one edge from $i$ to $j$ if and
only if $p_{ij}>0$; otherwise there is no edge from $i$ to $j$. As in \cite{KZ:15}, we denote by $G=\{1,\ldots,N\}$
both the directed graph and its vertex set. We write
\[
b_{ij}(s):=(p_{ij}c_{ij}^{r})^{\frac{s}{s+r}},\;\; A_G(s):=(b_{ij}(s))_{N\times N}.
\]
Let $\Psi_{G}(s)$ denote the spectral radius of $A_G(s)$. As we noted in \cite{KZ:15},
there exists a unique positive number $s_{r}$ such that $\Psi_{G}(s_r)=1$.

An element $i_{1}\ldots i_{k}\in\Omega_{k}$
is called a path in $G$. We call $H\subset G$, with edges inherited from
$G$, a subgraph of $G$. A subgraph $H$ of $G$ is called strongly
connected if for very pair $i_{1},i_{2}\in H$, there exists a path
$\gamma$ in $H$ which starts at $i_{1}$ and terminates at $i_{2}$. A
strongly connected component of $G$ means a maximal strongly
connected subgraph. We denote by ${\rm SC}(G)$ the set of all strongly
connected components of $G$.

For $H_{1},H_{2}\in{\rm SC}(G)$, we
write $H_{1}\prec H_{2}$, if there is a path $\gamma=i_{1}\ldots i_{k}$
in $G$ such that $i_{1}\in H_{1}$ and $i_{k}\in H_{2}$. If
neither $H_{1}\prec H_{2}$ nor $H_{2}\prec H_{1}$, then we say that $H_{1},H_{2}$
are incomparable.

For $H\in{\rm SC}(G)$, we denote by $A_H(s)$ the sub-matrix
$(b_{ij}(s))_{i,j\in H}$ of $A_{G}(s)$. Let $\Psi_{H}(s)$ be the
spectral radius of $A_H(s)$ and $s_{r}(H)$ be the unique positive
number satisfying $\Psi_{H}(s_{r}(H))=1$. By \cite{KZ:15},
we have
\[
s_{r}=\max_{H\in SC(G)}s_{r}(H).
\]
For every $r\in (0,\infty)$, we write
\[
\mathcal{M}_r:=\{H\in {\rm SC}(G):s_{r}(H)=s_{r}\},\; M_r:={\rm card}(\mathcal{M}_r).
\]

Assume that (\ref{cardpij>0}) and (\ref{g4}) are
satisfied. Let $\mu$ be as defined in (\ref{markovmeasure}).
It is proved in \cite{KZ:15} that

(a) $D_{r}(\mu)=s_{r}$ and $\underline{Q}_{r}^{s_{r}}(\mu)>0$;

(b) $\overline{Q}_{r}^{s_{r}}(\mu)<\infty$ if and only if elements of
$\mathcal{M}_r$ are pairwise
incomparable; otherwise, we have $\underline{Q}_{r}^{s_{r}}(\mu)=\infty$.

When $G$ is strongly connected, Lindsay determined the quantization dimension in terms of the temperature function of the corresponding dynamical systems and proved that the upper and lower quantization coefficient are positive and finite \cite{LJL:01}.

For two sequences $(a_{n})_{n=1}^{\infty}$ and $(b_{n})_{n=1}^{\infty}$ of positive
numbers, we write $a_{n}\lesssim b_{n}$ if there is some constant
$B$ independent of $n$ such that $a_{n}\leq B\cdot b_{n}$. If $a_{n}\lesssim b_{n}$
and $b_{n}\lesssim a_{n}$ we write $a_{n}\asymp b_{n}$. Then, if $\mathcal{M}_r$ consists of
incomparable elements, by (a) and (b), the convergence order of $e_{n,r}^r(\mu)$ is known:
\begin{equation}\label{formu1}
e_{n,r}^r(\mu)\asymp n^{-\frac{r}{s_r}}.
\end{equation}
 When $\mathcal{M}_r$ contains comparable elements, by (b), we have that $\underline{Q}_{r}^{s_{r}}(\mu)=\infty$. However, this does not provide us with accurate information on the asymptotics of the quantization error for $\mu$. As our main result of the paper, we will determine the exact asymptotic order of $e_{n,r}^r(\mu)$ in case that $\underline{Q}_{r}^{s_{r}}(\mu)=\infty$. For a path $\gamma=\gamma_1\cdots\gamma_{|\gamma|}\in\Omega_{|\gamma|}$, we define
\begin{equation*}
T_r(\gamma):={\rm card}(\{H\in\mathcal{M}_r:\gamma_i\in H\;\;{\rm for\;some}\;\;i\}).
\end{equation*}
Clearly, $0\leq T_r(\gamma)\leq M_r$ for all $\gamma\in\Omega^*$. Set $T_r:=\max_{\gamma\in\Omega^*}T_r(\gamma)$. Then we have $1\leq T_r\leq M_r$.  We will prove
\begin{theorem}\label{mthm1}
Assume that (\ref{cardpij>0}) and (\ref{g4}) are
satisfied and let $\mu$ be the Markov-type measure as defined in (\ref{markovmeasure}). We have
\begin{equation}\label{formu2}
e_{n,r}^r(\mu)\asymp n^{-\frac{r}{s_r}}\cdot(\log n)^{(T_r-1)(1+\frac{r}{s_r})}.
\end{equation}
\end{theorem}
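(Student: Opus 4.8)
The strategy is to reduce the problem to a counting estimate on the coding space $\Omega^{*}$ and to resolve that estimate via the Perron--Frobenius theory of reducible non-negative matrices. For $\sigma\in\Omega^{*}$ write $b_{\sigma}:=\prod_{h=1}^{|\sigma|-1}b_{\sigma_{h}\sigma_{h+1}}(s_{r})$ (empty product $=1$), so that $\mu(J_{\sigma})|J_{\sigma}|^{r}=\chi_{\sigma_{1}}b_{\sigma}^{(s_{r}+r)/s_{r}}\asymp b_{\sigma}^{(s_{r}+r)/s_{r}}$, the implied constants being harmless since $\min_{i}\chi_{i}>0$; note also that $b_{ij}(s_{r})\in(\underline b,1)$ for a fixed $\underline b>0$ whenever $p_{ij}>0$. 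For $\varepsilon\in(0,1)$ put
\[
\Gamma_{\varepsilon}:=\{\sigma\in\Omega^{*}:b_{\sigma}<\varepsilon\leq b_{\sigma^{-}}\},
\]
where $\sigma^{-}$ is $\sigma$ with its last letter deleted and $b_{\theta}:=1$. Each $\Gamma_{\varepsilon}$ is a finite maximal antichain ($b$ is strictly decreasing along every path and tends to $0$), $b_{\sigma}\in[\underline b\,\varepsilon,\varepsilon)$ for all $\sigma\in\Gamma_{\varepsilon}$, and ${\rm card}(\Gamma_{\varepsilon})$ is non-increasing in $\varepsilon$ with jumps of size $<N$. Placing one point in each $J_{\sigma}$, $\sigma\in\Gamma_{\varepsilon}$, gives $e_{{\rm card}(\Gamma_{\varepsilon}),r}^{r}(\mu)\leq\sum_{\sigma\in\Gamma_{\varepsilon}}\mu(J_{\sigma})|J_{\sigma}|^{r}\asymp\varepsilon^{1+r/s_{r}}\,{\rm card}(\Gamma_{\varepsilon})$. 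For the lower bound, (\ref{g4}) applied at the longest common prefix of two distinct $\sigma,\tau\in\Gamma_{\varepsilon}$ gives $d(J_{\sigma},J_{\tau})\geq t\max\{|J_{\sigma}|,|J_{\tau}|\}$, so every point of $\mathbb R^{t}$ lies within $\tfrac t3|J_{\sigma}|$ of $J_{\sigma}$ for at most one $\sigma\in\Gamma_{\varepsilon}$; hence, choosing $\varepsilon$ with ${\rm card}(\Gamma_{\varepsilon})\geq 2n$, any $n$-point set $\alpha$ is at distance $\geq\tfrac t3|J_{\sigma}|$ on all of $J_{\sigma}$ from at least ${\rm card}(\Gamma_{\varepsilon})-n\geq n$ of these cylinders, whence $\int d(x,\alpha)^{r}\,d\mu\gtrsim n\,\varepsilon^{1+r/s_{r}}$. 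Using monotonicity of $n\mapsto e_{n,r}(\mu)$ and the bounded jump sizes, we conclude that for $n\asymp{\rm card}(\Gamma_{\varepsilon})$,
\[
e_{n,r}^{r}(\mu)\asymp\varepsilon^{1+r/s_{r}}\,{\rm card}(\Gamma_{\varepsilon}).
\]

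Everything therefore hinges on the asymptotics of $\Phi(\varepsilon):={\rm card}(\Gamma_{\varepsilon})$ as $\varepsilon\to0$. I claim that
\[
\Phi(\varepsilon)\asymp\varepsilon^{-1}\bigl(\log(1/\varepsilon)\bigr)^{T_{r}-1}.
\]
Granting this, $\log\Phi(\varepsilon)=(1+o(1))\log(1/\varepsilon)$, so from $n\asymp\Phi(\varepsilon)$ we obtain $\log(1/\varepsilon)\asymp\log n$ and then $\varepsilon\asymp n^{-1}(\log n)^{T_{r}-1}$; substituting into the displayed equivalence and collecting exponents gives $e_{n,r}^{r}(\mu)\asymp n^{-r/s_{r}}(\log n)^{(T_{r}-1)(1+r/s_{r})}$, which is exactly (\ref{formu2}). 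To prove the claim I pass to a renewal system: for a vertex $v$ set $\widetilde H_{v}(\varepsilon):={\rm card}\{\sigma\in\Gamma_{\varepsilon}:\sigma_{1}=v\}$ for $\varepsilon\le 1$ and $\widetilde H_{v}(\varepsilon):=1$ for $\varepsilon>1$; splitting off the first letter yields
\[
\widetilde H_{v}(\varepsilon)=\sum_{w:\,p_{vw}>0}\widetilde H_{w}\bigl(\varepsilon/b_{vw}(s_{r})\bigr)\qquad(0<\varepsilon\le 1),
\]
together with $\Phi(\varepsilon)=\sum_{v}\widetilde H_{v}(\varepsilon)$. With $\varepsilon=e^{-u}$ this is a vector-valued renewal equation whose transform matrix is $\bigl(b_{vw}(s_{r})^{z}\bigr)_{v,w}$; at $z=1$ it equals $A_{G}(s_{r})$, with spectral radius $\Psi_{G}(s_{r})=1$ attained on every critical block $A_{H}(s_{r})$, $H\in\mathcal M_{r}$, and on no other, while near $z=1$ the spectral radius is strictly decreasing. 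Thus $z=1$ is the relevant singularity, and the growth of $\widetilde H_{v}$ is governed by the pole of $\bigl(I-zA_{G}(s_{r})\bigr)^{-1}=\sum_{k\ge0}z^{k}A_{G}(s_{r})^{k}$ at $z=1$.

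The crux — and, I expect, the main obstacle — is to pin down the order of that pole and to convert it into a genuine two-sided estimate for $\Phi$. Since $G$ need not be strongly connected, $A_{G}(s_{r})$ is in general a reducible non-negative matrix of spectral radius $1$ whose basic classes (irreducible blocks attaining the spectral radius) are precisely the members of $\mathcal M_{r}$; by Rothblum's index theorem for non-negative matrices, the index of the eigenvalue $1$ — equivalently the order of the pole of $\bigl(I-zA_{G}(s_{r})\bigr)^{-1}$ at $z=1$ — equals the length of the longest chain of basic classes in the condensation order of $G$, which is precisely $T_{r}$. Consequently $\sum_{|\sigma|=k}b_{\sigma}\asymp\|A_{G}(s_{r})^{k}\|\asymp k^{T_{r}-1}$, and the matrix renewal theorem for such (possibly reducible) kernels yields $\widetilde H_{v}(\varepsilon)\asymp\varepsilon^{-1}(\log(1/\varepsilon))^{d_{v}-1}$, where $d_{v}$ is the length of the longest chain of basic classes accessible from $v$; summing over $v$ and using $\max_{v}d_{v}=T_{r}$ gives the claim. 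The delicate points I anticipate are the Tauberian/renewal step that turns the pole order into the honest $\asymp$-asymptotics of $\Phi$, and the possibility that the numbers $\log b_{vw}(s_{r})$ are lattice-commensurable, which may produce bounded oscillations but does not affect the order of growth, so the $\asymp$ survives. As a consistency check, when the elements of $\mathcal M_{r}$ are pairwise incomparable one has $T_{r}=1$ and (\ref{formu2}) reduces to $e_{n,r}^{r}(\mu)\asymp n^{-r/s_{r}}$, in agreement with (\ref{formu1}).
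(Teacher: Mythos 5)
Your reduction of (\ref{formu2}) to the single counting estimate ${\rm card}(\Gamma_{\varepsilon})\asymp\varepsilon^{-1}(\log(1/\varepsilon))^{T_{r}-1}$ is sound and is essentially the same reduction the paper makes: your $\Gamma_{\varepsilon}$ is the paper's $\Lambda_{k,r}$ (take $\varepsilon=\underline{\eta}_{r}^{ks_{r}/(s_{r}+r)}$), your two-sided bound $e_{n,r}^{r}(\mu)\asymp\varepsilon^{1+r/s_{r}}\,{\rm card}(\Gamma_{\varepsilon})$ is (\ref{characterization}), and your claimed asymptotics for ${\rm card}(\Gamma_{\varepsilon})$ is exactly Lemma \ref{zz5} rewritten. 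Your observation that $T_{r}$ is the Rothblum index of the eigenvalue $1$ of $A_{G}(s_{r})$ --- the length of the longest chain of basic classes, which are precisely the members of $\mathcal{M}_{r}$ --- is correct and is a nice conceptual gloss that the paper does not make explicit.

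The gap is that the counting claim itself is not proved. Rothblum's theorem does give $\sum_{|\sigma|=k+1}b_{\sigma}\asymp\|A_{G}(s_{r})^{k}\|\asymp k^{T_{r}-1}$, but this does not yield the antichain count: the words of $\Gamma_{\varepsilon}$ occupy $\asymp\log(1/\varepsilon)$ consecutive levels (Lemma \ref{zz9}), so summing the level bounds gives only $\sum_{\sigma\in\Gamma_{\varepsilon}}b_{\sigma}\lesssim(\log(1/\varepsilon))^{T_{r}}$ --- one logarithm too many --- and yields no lower bound at all, since $\Gamma_{\varepsilon}$ is not a level set. What is actually needed, and what the paper's Lemmas \ref{zz4}, \ref{g1} and \ref{zz8} supply, is that a maximal antichain inside a single critical class contributes $O(1)$ (inequality (\ref{s7})) no matter how many levels it spans, so that the factor $(\log(1/\varepsilon))^{l-1}$ comes only from the $l-1$ free ``transition times'' between consecutive classes of a chain $H_{1}\prec\cdots\prec H_{l}$; the two-sided control of that discrete convolution is the real content of the proof. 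Your appeal to ``the matrix renewal theorem for such (possibly reducible) kernels'' to get $\widetilde H_{v}(\varepsilon)\asymp\varepsilon^{-1}(\log(1/\varepsilon))^{d_{v}-1}$ is not a citation to an existing result in the required two-sided form: the standard Markov renewal theorem assumes an irreducible kernel, and the case of nested basic classes (together with the lattice issue you flag yourself) would have to be established by precisely the induction-along-the-chain convolution argument just described. So the architecture and the identification of the exponent are right, but the central quantitative lemma is asserted rather than established.
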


If $T_r=1$, then $\mathcal{M}_r$ consists of incomparable elements and (\ref{formu2}) degenerates to (\ref{formu1}). Hence, we assume that $T_r\geq 2$ in the remaining part of the paper.
\section{Preliminaries}

For every $k\geq2$ and $\sigma=\sigma_{1}\ldots\sigma_{k}\in\Omega_{k}$,
we write
\begin{eqnarray*}
\sigma^{-}:=\sigma|_{k-1};\;\; p_{\sigma}:=\prod_{h=1}^{k-1}p_{\sigma_{h}\sigma_{h+1}},\; c_{\sigma}:=\prod_{h=1}^{k-1}c_{\sigma_{h}\sigma_{h+1}}.
\end{eqnarray*}
If $|\sigma|=1$, we set $\sigma^{-}=\theta$; we also define $p_{\sigma}:=1,c_{\sigma}:=1$ for
$\sigma\in\Omega_{1}\cup\left\{ \theta\right\} $. If $\sigma,\omega\in\Omega^{*}$
satisfy $|\sigma|\leq|\omega|$ and $\sigma=\omega|_{|\sigma|}$,
then we write $\sigma\prec\omega$.
We say that two words $\sigma,\omega\in\Omega^{*}$ are incomparable
if neither $\sigma\prec\omega$, nor $\omega\prec\sigma$. We call a finite
subset $\Gamma$ of $\Omega^{*}$ a finite antichain if $\Gamma$ consists of pairwise
incomparable words; a finite antichain $\Gamma$
is said to be maximal, if for every word $\tau\in\Omega_{\infty}$, there exists some word $\sigma\in\Gamma$ such that $\sigma\prec\tau$. Set
\begin{eqnarray*}
\underline{p}:=\min_{(i,j)\in\Omega_{2}}p_{ij},\;\underline{c}:=\min_{(i,j)\in\Omega_{2}}c_{ij},\;\overline{p}:=\max_{(i,j)\in\Omega_{2}}p_{ij},\;\overline{c}:=\max_{(i,j)\in\Omega_{2}}c_{ij}.
\end{eqnarray*}
For $r>0$, let $\underline{\eta}_r:=\underline{p}\underline{c}^{r}$. For every $k\in\mathbb{N}$, we define
\begin{eqnarray}\label{s3}
\Lambda_{k,r}:=\{\sigma\in\Omega^{*}:p_{\sigma^{-}}c_{\sigma^{-}}^{r}\geq\underline{\eta}_r^{k}>p_{\sigma}c_{\sigma}^{r}\};\;
\phi_{k,r}:={\rm card}(\Lambda_{k,r}).
\end{eqnarray}
Then $(\Lambda_{k,r})_{k=1}^{\infty}$ is a sequence of finite maximal
antichains. Write
\begin{eqnarray*}
l_{1k}:=\min_{\sigma\in\Lambda_{k,r}}|\sigma|,\; l_{2k}:=\max_{\sigma\in\Lambda_{k,r}}|\sigma|,\;k\geq 1.
\end{eqnarray*}
With Lemma 2.2 in  \cite{KZ:15}, we have showed that
\begin{eqnarray}
e_{\phi_{k,r},r}^{r}(\mu)\asymp\sum_{\sigma\in\Lambda_{k,r}}p_{\sigma}c_{\sigma}^{r}.\label{characterization}
\end{eqnarray}

We will also need the following estimates of the order of $l_{1k},l_{2k}$ and $\log\phi_{k,r}$:
\begin{lemma}\label{zz9}
For every $r>0$, we have
\begin{eqnarray}
l_{1k},l_{2k}\asymp k\;\;{\rm and}\;\;\log\phi_{k,r}\asymp k.
\end{eqnarray}
\end{lemma}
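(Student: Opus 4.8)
The plan is to reduce the lemma to two elementary facts about the weights. Writing $\overline{\eta}_r:=\overline{p}\,\overline{c}^{\,r}$, we have $0<\underline{\eta}_r\le p_{ij}c_{ij}^{\,r}\le\overline{\eta}_r<1$ for every $(i,j)\in\Omega_{2}$, the strict upper bound holding because $\overline{c}<1$; and, since by (\ref{cardpij>0}) every row of $P$ has at least two positive entries, $\overline{p}\le 1-\underline{p}<1$. From the first fact together with the factorizations $p_\sigma=\prod_h p_{\sigma_h\sigma_{h+1}}$ and $c_\sigma=\prod_h c_{\sigma_h\sigma_{h+1}}$ one gets, for every $\sigma\in\Omega_{n}$, the two-sided geometric estimate $\underline{\eta}_r^{\,n-1}\le p_\sigma c_\sigma^{\,r}\le\overline{\eta}_r^{\,n-1}$.

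First I would bound $l_{1k}$ and $l_{2k}$. If $\sigma\in\Lambda_{k,r}$ and $|\sigma|=n$, then $\underline{\eta}_r^{\,k}>p_\sigma c_\sigma^{\,r}\ge\underline{\eta}_r^{\,n-1}$ forces $n\ge k+2$, so $l_{1k}\ge k+2$; while $\underline{\eta}_r^{\,k}\le p_{\sigma^-}c_{\sigma^-}^{\,r}\le\overline{\eta}_r^{\,n-2}$, after taking logarithms and dividing by $\log\overline{\eta}_r<0$, gives $n\le 2+k\,(\log\underline{\eta}_r)/(\log\overline{\eta}_r)$, so $l_{2k}\le 2+k\,(\log\underline{\eta}_r)/(\log\overline{\eta}_r)$. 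Since $k+2\le l_{1k}\le l_{2k}$, both quantities are $\asymp k$.

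For $\phi_{k,r}$, the upper bound follows at once from the length bound: $\Lambda_{k,r}\subset\bigcup_{j\le l_{2k}}\Omega_{j}$, so $\phi_{k,r}\le\sum_{j\le l_{2k}}N^{\,j}\le N^{\,l_{2k}+1}$, hence $\log\phi_{k,r}\le(l_{2k}+1)\log N\lesssim k$. For the lower bound I would use that $\Lambda_{k,r}$ is a finite maximal antichain, so the cylinders $[\sigma]$, $\sigma\in\Lambda_{k,r}$, partition $\Omega_{\infty}$ and $\sum_{\sigma\in\Lambda_{k,r}}\widetilde{\mu}([\sigma])=1$. Since $\widetilde{\mu}([\sigma])=\chi_{\sigma_1}p_\sigma\le\overline{p}^{\,|\sigma|-1}\le\overline{p}^{\,l_{1k}-1}$ with $\overline{p}<1$ and $l_{1k}\ge k+2$, we obtain $1\le\phi_{k,r}\,\overline{p}^{\,l_{1k}-1}\le\phi_{k,r}\,\overline{p}^{\,k+1}$, i.e. $\phi_{k,r}\ge\overline{p}^{\,-(k+1)}$, so $\log\phi_{k,r}\ge(k+1)\log(1/\overline{p})\gtrsim k$.

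I do not expect a substantive obstacle here. The only places where the hypotheses genuinely enter are the inequality $\overline{p}<1$ — which is exactly what (\ref{cardpij>0}) buys, and without which a deterministic branch would prevent $\phi_{k,r}$ from growing geometrically — and the fact that a maximal antichain carries full $\widetilde{\mu}$-mass; the rest is bookkeeping with the estimate $\underline{\eta}_r^{\,n-1}\le p_\sigma c_\sigma^{\,r}\le\overline{\eta}_r^{\,n-1}$, the main care being to keep the directions of the inequalities straight when exponentiating numbers in $(0,1)$.
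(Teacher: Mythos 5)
Your proof is correct, and the two halves differ in how much they overlap with the paper. For $l_{1k},l_{2k}\asymp k$ you argue exactly as the paper does, from the two-sided geometric bound $\underline{\eta}_r^{\,n-1}\le p_\sigma c_\sigma^{\,r}\le\overline{\eta}_r^{\,n-1}$ applied to $\sigma$ and $\sigma^-$. For $\log\phi_{k,r}\asymp k$, however, you take a genuinely different and more elementary route: the paper introduces the auxiliary exponents $t_{k,r}$ defined by $\sum_{\sigma\in\Lambda_{k,r}}(p_\sigma c_\sigma^r)^{t_{k,r}/(t_{k,r}+r)}=1$, uses the quantization characterization (\ref{characterization}) to show $e_{\phi_{k,r},r}^r(\mu)\asymp\phi_{k,r}^{-r/t_{k,r}}$ and hence $t_{k,r}\to D_r(\mu)=s_r$, and only then extracts $\log\phi_{k,r}\asymp k$ from (\ref{s17}); your argument bypasses the quantization machinery entirely, getting the upper bound by crude counting ($\phi_{k,r}\le N^{l_{2k}+1}$) and the lower bound from the normalization $\sum_{\sigma\in\Lambda_{k,r}}\widetilde{\mu}([\sigma])=1$ over the maximal antichain together with $\overline{p}\le 1-\underline{p}<1$, which is precisely where (\ref{cardpij>0}) enters. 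Your version buys independence from the dimension result $D_r(\mu)=s_r$ of \cite{KZ:15} and is self-contained; the paper's version costs nothing extra here because the intermediate estimates (\ref{s17})--(\ref{g2}) and the convergence $t_{k,r}\to s_r$ are reused conceptually elsewhere. All the individual steps you use check out: $\Lambda_{k,r}$ is indeed a finite maximal antichain, so the cylinders partition $\Omega_\infty$, and $\widetilde{\mu}([\sigma])=\chi_{\sigma_1}p_\sigma\le\overline{p}^{\,|\sigma|-1}$.
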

\begin{proof}
Set $\overline{\eta}_r:=\overline{p}\overline{c}^{r}$. By (\ref{s3}), one can easily see that
\begin{eqnarray}
\underline{\eta}_r^{l_{1k}-1}\leq\underline{\eta}_r^{k}\leq\overline{\eta}_r^{l_{2k}-2}.
\end{eqnarray}
This implies that $l_{1k},l_{2k}\asymp k$. To see the remaining part of the lemma, for every $k\geq 1$,
let $t_{k,r}$ be the unique positive number satisfying
\[
\sum_{\sigma\in\Lambda_{k,r}}(p_{\sigma}c_{\sigma}^{r})^{\frac{t_{k,r}}{t_{k,r}+r}}=1.
\]
 By the definitions in (\ref{s3}), we have
\begin{eqnarray}\label{s17}
&\phi_{k,r}\underline{\eta}_r^{\frac{(k+1)t_{k,r}}{t_{k,r}+r}}\leq1\leq\phi_{k,r}\underline{\eta}_r^{\frac{kt_{k,r}}{t_{k,r}+r}},\;\;k\geq1;
\\&\underline{\eta}_r^{\frac{kr}{t_{k,r}+r}}\underline{\eta}_r\leq\underline{\eta}_r^{\frac{(k+1)r}{t_{k,r}+r}}\leq(p_{\sigma}c_{\sigma}^{r})^{\frac{r}{t_{k,r}+r}}\leq\underline{\eta}_r^{\frac{kr}{t_{k,r}+r}},\;\;\sigma\in\Lambda_{k,r}.\label{s18}
\end{eqnarray}
Using (\ref{characterization}), (\ref{s17}) and (\ref{s18}), we deduce
\begin{eqnarray}\label{g2}
e_{\phi_{k,r},r}^{r}(\mu)&\asymp&\sum_{\sigma\in\Lambda_{k,r}}p_{\sigma}c_{\sigma}^{r}=\sum_{\sigma\in\Lambda_{k,r}}(p_{\sigma}c_{\sigma}^{r})^{\frac{t_{k,r}}{t_{k,r}+r}}(p_{\sigma}c_{\sigma}^{r})^{\frac{r}{t_{k,r}+r}}\nonumber\\
&\asymp&\underline{\eta}_r^{\frac{kr}{t_{k,r}+r}}\asymp\phi_{k,r}^{-\frac{r}{t_{k,r}}}.
\end{eqnarray}
Hence, $t_{k,r}$ converges to $D_r(\mu)=s_r$ as $k\to\infty$. Thus for large $k$, we have
\[
\frac{s_r}{2}\leq t_{k,r}\leq 2s_r\;\;{\rm and}\;\;\xi_r:=\frac{s_r}{s_r+2r}\leq\frac{t_{k,r}}{t_{k,r}+r}\leq \frac{2s_r}{2s_r+r}=:\zeta_r.
\]
By this and (\ref{s17}), we deduce that $1\leq\phi_{k,r}\underline{\eta}_r^{k\xi_r}$ and $\phi_{k,r}\underline{\eta}_r^{k\zeta_r}\leq \underline{\eta}_r^{-\zeta_r}$.
\[
k(\xi_r\log\underline{\eta}_r^{-1})\leq\log\phi_{k,r}\leq k\zeta_r\log\underline{\eta}_r^{-1}+\zeta_r\log\underline{\eta}_r^{-1}\leq k(2\zeta_r\log\underline{\eta}_r^{-1})
\]
for large $k$. This completes the proof of the lemma.
\end{proof}

Next we recall some notations and basic facts related to strongly connected components of $G$.
For every $H\in{\rm SC}(G)$, we write $H^{*}:=\bigcup_{k=1}^{\infty}H^{k}$ and
\begin{eqnarray*}
H_{k}(i) & := & \left\{ \sigma\in H^{k}:\sigma_{1}=i\right\} ,\; H^{*}(i):=\bigcup_{k=1}^{\infty}H_{k}(i);
\;i\in H.
\end{eqnarray*}
Maximal antichains in $H^{*}$ or $H^{*}(i)$ are defined in the same manner as we did for those in $\Omega^{*}$. By Lemma 3.5 in \cite{KZ:15}, there exist constants $M_0,M_1$ such that
\begin{eqnarray}\label{s7}
M_0\leq\sum_{\sigma\in\Gamma}(p_\sigma c_\sigma^r)^{\frac{s_r}{s_r+r}}\leq M_1.
\end{eqnarray}
for every $H\in \mathcal{M}_r$ and every finite maximal antichain $\Gamma$ in $H^*$ or $H^*(i)$.

For $k\geq1$ and a vector $w=(w_{i})_{i=1}^{k}\in\mathbb{R}^{k}$,
we define
\begin{eqnarray}\label{s6}
\overline{w}:=\max_{1\leq i\leq k}w_{i},\;\underline{w}:=\min_{1\leq i\leq k}w_{i}.
\end{eqnarray}
For $H\in\mathcal{M}_r$, denote by $c_{ij}^{(h)}(H)$ the $(i,j)$-entry of $A^h_{H,s_r}$. We have
\begin{lemma}\label{zz7}
There exist constants $C_1,C_2$ such that for  $h\geq 1$, we have
\[
C_1\leq\sum_{j\in H}c_{jp}^{(h)}(H)\leq C_2,\;\;{\rm for}\;\; H\in\mathcal{M}_r\;\;{\rm and} \;\;p\in H.
\]
\end{lemma}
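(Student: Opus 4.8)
The goal is two-sided control of $\sum_{j\in H}c_{jp}^{(h)}(H)$ uniformly in $h\geq 1$, $H\in\mathcal M_r$, and $p\in H$. The key tool is (\ref{s7}): for a maximal antichain $\Gamma$ in $H^*(i)$, $\sum_{\sigma\in\Gamma}(p_\sigma c_\sigma^r)^{s_r/(s_r+r)}$ is bounded above and below by absolute constants. The plan is to relate $\sum_{j\in H}c_{jp}^{(h)}(H)$ to such an antichain sum. Note $c_{jp}^{(h)}(H)=\sum_{\sigma}(p_\sigma c_\sigma^r)^{s_r/(s_r+r)}$ where the sum runs over all paths $\sigma=\sigma_1\cdots\sigma_{h+1}$ in $H$ of edge-length $h$ with $\sigma_1=j$ and $\sigma_{h+1}=p$; here I am using the entry-wise formula for matrix powers together with $b_{ij}(s_r)=(p_{ij}c_{ij}^r)^{s_r/(s_r+r)}$ and the multiplicativity $p_\sigma c_\sigma^r=\prod_h p_{\sigma_h\sigma_{h+1}}c_{\sigma_h\sigma_{h+1}}^r$. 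Summing over $j\in H$, $\sum_{j\in H}c_{jp}^{(h)}(H)$ is the sum of $(p_\sigma c_\sigma^r)^{s_r/(s_r+r)}$ over \emph{all} length-$h$ paths in $H$ ending at $p$.

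\textbf{Upper bound.} Fix $p\in H$ and $h\geq 1$. Since $H$ is strongly connected, for each vertex $i\in H$ choose a fixed path $\tau_i$ in $H$ from $p$ to $i$ (with $\tau_p$ trivial); let $L:=\max_{i\in H}|\tau_i|$, a constant depending only on $H$. Given a length-$h$ path $\sigma$ ending at $p$, say starting at $j=\sigma_1$, consider the collection of length-$h$ paths ending at $p$; I want to ``close them up'' into paths in $H^*(p)$. The trick is: the set of length-$h$ paths in $H$ ending at $p$, when each is prepended/appended appropriately... more cleanly, reverse the viewpoint: consider $H^*(p)$ and the antichain $\Gamma_h$ of words in $H^*(p)$ whose edge-length first reaches or exceeds $h$. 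Actually the cleanest route is to use the transpose. Let $A_H:=A_H(s_r)$; then $\sum_{j\in H}c_{jp}^{(h)}(H)=(e^\top A_H^h)_p$ is a coordinate of the row vector obtained by applying $A_H^h$ on the left to the all-ones vector $e$, i.e. it equals $\sum_{j}(A_H^\top)^h_{pj}$, a row sum of $(A_H^\top)^h$. The transpose $A_H^\top$ is again irreducible with the same spectral radius $1$, and corresponds to the reversed graph $H^{\mathrm{rev}}$, which is also strongly connected. So it suffices to bound row sums $\sum_j (A_H^\top)^h_{pj}$, equivalently $\sum_{\omega}(p_\omega c_\omega^r)^{s_r/(s_r+r)}$ over length-$h$ paths $\omega$ in $H^{\mathrm{rev}}$ starting at $p$.

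Now this last sum \emph{is} essentially an antichain sum: the set of length-$h$ paths starting at $p$ forms a maximal antichain $\Gamma_h^{\mathrm{rev}}$ in $(H^{\mathrm{rev}})^*(p)$. By (\ref{s7}) applied to $H^{\mathrm{rev}}$ (which lies in $\mathcal M_r$ iff $H$ does, since spectral radius is transpose-invariant), we get $M_0\leq\sum_{\omega\in\Gamma_h^{\mathrm{rev}}}(p_\omega c_\omega^r)^{s_r/(s_r+r)}\leq M_1$. Setting $C_1:=M_0$ and $C_2:=M_1$ finishes both bounds simultaneously. The one subtlety is whether (\ref{s7}) was stated for $H^{\mathrm{rev}}$ or only for subgraphs of the original $G$; if the latter, I would instead argue directly that $\Gamma_h$ — the set of length-$h$ paths starting at $p$ \emph{in $H$ itself} — is a maximal antichain in $H^*(p)$, and that $\sum_{j\in H}c_{jp}^{(h)}(H)$ and $\sum_{j\in H}c_{pj}^{(h)}(H)$ differ by at most a multiplicative constant via the following comparison: using irreducibility pick, for each ordered pair $(i,i')\in H^2$, a connecting path of length $\leq L$; then each length-$h$ path ending at $p$ can be matched (injectively up to bounded multiplicity $\le |H|^{L}$) to a length-$(h+\ell)$ path starting at $p$ with $0\le\ell\le L$, changing $(p_\sigma c_\sigma^r)^{s_r/(s_r+r)}$ by a factor in $[\underline\eta_r^{L s_r/(s_r+r)},1]$, a constant. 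Combining with (\ref{s7}) for $H^*(p)$ over antichains $\Gamma_{h+\ell}$, $0\le\ell\le L$, yields the upper bound; the lower bound is obtained symmetrically.

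\textbf{Main obstacle.} The real work is the bookkeeping in the comparison between ``paths ending at $p$'' (what the column sum counts) and ``paths starting at $p$'' (what antichains in $H^*(p)$ naturally describe). Passing through the transpose graph makes this clean provided (\ref{s7}) is available for $H^{\mathrm{rev}}$; if it is only available for $H$, the direct matching argument above works but requires care that the matching has bounded multiplicity and bounded length distortion — both controlled by the strong connectedness constant $L$ of $H$ and the uniform bounds $\underline\eta_r\le p_{ij}c_{ij}^r<1$. Once that is set up, everything reduces to a single application of (\ref{s7}).
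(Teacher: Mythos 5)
Your reduction of $\sum_{j\in H}c_{jp}^{(h)}(H)$ to a sum of $(p_\sigma c_\sigma^r)^{s_r/(s_r+r)}$ over edge-length-$h$ paths in $H$ ending at $p$ is correct, but both of your routes have problems. The primary route invokes (\ref{s7}) for the reversed graph $H^{\mathrm{rev}}$; that inequality is only available for strongly connected components of $G$ itself and for antichains in $H^*$ or $H^*(i)$, and $H^{\mathrm{rev}}$ is not such a component (it is a different weighted digraph), so you would have to re-derive the analogue of (\ref{s7}) from scratch --- which amounts to redoing the Perron--Frobenius argument you are trying to avoid.

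The fallback has a genuine gap in the lower bound. Your matching uses connectors whose length varies with the vertex being connected; run ``symmetrically'', it takes a maximal antichain of length-$h'$ paths issuing from a fixed vertex (weight $\geq M_0$ by (\ref{s7})) and appends connectors of lengths between $0$ and $L$, producing paths ending at $p$ of $L+1$ \emph{different} lengths $h',\dots,h'+L$. This only yields $\sum_{\ell=0}^{L}\sum_{j}c_{jp}^{(h'+\ell)}(H)\gtrsim 1$, i.e.\ that at least one of $L+1$ consecutive column sums is bounded below --- not the required bound for each individual $h$. The obstruction is real: if $H$ has period $d>1$, paths between a given ordered pair of vertices exist only for lengths in a single residue class mod $d$, so you cannot naively force all connectors to have a common length. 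The gap is repairable (choose a starting vertex in the cyclic class compatible with $h$ mod $d$, use connectors of one common length $L_0$ large enough that paths of length exactly $L_0$ exist between all compatible pairs, and apply (\ref{s7}) to the single antichain of length-$(h-L_0)$ paths from that vertex), but none of this bookkeeping is in your write-up. For comparison, the paper's proof is two lines and bypasses (\ref{s7}) entirely: since $A_{H,s_r}$ is irreducible with spectral radius $1$, its positive left Perron eigenvector $\xi_H$ satisfies $\sum_{j\in H}\xi_{H,j}c_{jp}^{(h)}(H)=\xi_{H,p}$ for every $h$, whence $\underline{\xi_H}/\overline{\xi_H}\leq\sum_{j\in H}c_{jp}^{(h)}(H)\leq\overline{\xi_H}/\underline{\xi_H}$; this gives both bounds at once and is insensitive to periodicity. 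You would do well to adopt that argument.
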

\begin{proof}
Assume that $H\in\mathcal{M}_r$ and ${\rm card}(H)=m$. $H$ is strongly connected, so $A_{H,s_r}$ is irreducible. Let $\xi_H=(\xi_{H,i})_{i=1}^m$ be the unique normalized positive left eigenvector of $A_{H,s_r}$ with respect to Perron-Frobenius eigenvalue $1$. Then
\[
\xi_HA^h_{H,s_r}=\xi_H,\;\;{\rm implying}\;\;\sum_{j\in H}\xi_{H,j}c_{jp}^{(h)}(H)=\xi_{H,p}.
\]
Hence, using the notations in (\ref{s6}), we have
\[
\underline{\xi_H}/\overline{\xi_H}\leq\sum_{j\in H}c_{jp}^{(h)}(H)\leq\overline{\xi_H}/\underline{\xi_H}.
\]
It suffices to set $C_1:=\min_{H\in\mathcal{M}_r}\underline{\xi_H}/\overline{\xi_H}$ and $C_2:=\max_{H\in\mathcal{M}_r}\overline{\xi_H}/\underline{\xi_H}$.
\end{proof}
Let $F:=G\setminus\bigcup_{H\in\mathcal{M}_r}H$. It may happen that $F=\emptyset$. If $F\neq\emptyset$, we set
\begin{eqnarray*}
F_{k}:=\{\sigma\in\Omega_{k}:\sigma_{h}\in F,1\leq h\leq k\},\; k\geq1;\;\; F^{*}:=\bigcup_{k=0}^{\infty}F_{k}.
\end{eqnarray*}
\begin{lemma}\label{zz6}(\cite[Lemma 3.8]{KZ:15})
There exists a constant $t\in(0,1)$ such that
\[
\sum_{\sigma\in F_{n}}\left(p_{\sigma}c_{\sigma}^{r}\right)^{s_{r}/\left(s_{r}+r\right)}\lesssim t^{n}\;\;{\rm for\;large}\;\;n\in\mathbb{N}.
\]
As a consequence, we have
$\sum_{\sigma\in F^*}\left(p_{\sigma}c_{\sigma}^{r}\right)^{s_{r}/\left(s_{r}+r\right)}\lesssim 1$.
\end{lemma}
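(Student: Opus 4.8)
The plan is to recast the sum over $F_{n}$ as a bilinear form in powers of a principal submatrix of $A_G(s_r)$, and then to show that this submatrix has spectral radius strictly less than $1$. First note that for $\sigma=\sigma_{1}\cdots\sigma_{n}\in\Omega_{n}$ one has
\[
(p_{\sigma}c_{\sigma}^{r})^{\frac{s_{r}}{s_{r}+r}}=\prod_{h=1}^{n-1}(p_{\sigma_{h}\sigma_{h+1}}c_{\sigma_{h}\sigma_{h+1}}^{r})^{\frac{s_{r}}{s_{r}+r}}=\prod_{h=1}^{n-1}b_{\sigma_{h}\sigma_{h+1}}(s_{r}).
\]
Writing $A_{F}(s_{r}):=(b_{ij}(s_{r}))_{i,j\in F}$ for the principal submatrix of $A_{G}(s_{r})$ indexed by $F$ and $\mathbf 1$ for the all-ones vector in $\mathbb{R}^{{\rm card}(F)}$, and observing that a word $\sigma\in F_{n}$ is precisely a path $\sigma_{1}\to\cdots\to\sigma_{n}$ through vertices of $F$, summing the displayed product over $\sigma\in F_{n}$ records exactly the sum of all entries of $A_{F}(s_{r})^{n-1}$:
\[
\sum_{\sigma\in F_{n}}(p_{\sigma}c_{\sigma}^{r})^{\frac{s_{r}}{s_{r}+r}}=\mathbf 1^{\top}A_{F}(s_{r})^{n-1}\mathbf 1,\qquad n\ge1.
\]
Thus it suffices to prove that the spectral radius of $A_{F}(s_{r})$ is strictly less than $1$.

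To establish this I would analyse $A_{F}(s_{r})$ through its strongly connected components. Since $\bigcup_{H\in\mathcal{M}_{r}}H$ is a union of entire components of $G$, the strongly connected components of the subgraph induced on $F$ are exactly those $H\in{\rm SC}(G)$ with $H\subseteq F$, i.e.\ the $H\notin\mathcal{M}_{r}$: two vertices of $F$ lie in a common component of $G$ iff the connecting paths, which may be taken inside that component, stay inside $F$. For each such $H$ we have $s_{r}(H)<s_{r}$ by the maximality defining $\mathcal{M}_{r}$. Moreover each positive entry $b_{ij}(s)=(p_{ij}c_{ij}^{r})^{s/(s+r)}$ has base $p_{ij}c_{ij}^{r}<1$ (as $c_{ij}<1$) and exponent $s/(s+r)$ increasing in $s$, so $b_{ij}(\cdot)$ is decreasing; since $A_{H}(s)$ is irreducible, its Perron--Frobenius eigenvalue $\Psi_{H}(\cdot)$ is strictly decreasing, whence $\Psi_{H}(s_{r})<\Psi_{H}(s_{r}(H))=1$. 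Putting $A_{F}(s_{r})$ into Frobenius normal form, its spectral radius equals the largest spectral radius among the diagonal blocks; each nonzero block is $A_{H}(s_{r})$ for a component $H\subseteq F$ and satisfies $\Psi_{H}(s_{r})<1$ (from $s_{r}(H)<s_{r}$ together with the strict monotonicity just noted, or directly from $b_{ii}(s_{r})<1$ for a single self-loop vertex), while any remaining $1\times1$ block vanishes. Hence the spectral radius of $A_{F}(s_{r})$ is some $\rho<1$.

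Finally, fix any $t\in(\rho,1)$. By Gelfand's formula $\|A_{F}(s_{r})^{m}\|^{1/m}\to\rho$, so
\[
\sum_{\sigma\in F_{n}}(p_{\sigma}c_{\sigma}^{r})^{\frac{s_{r}}{s_{r}+r}}=\mathbf 1^{\top}A_{F}(s_{r})^{n-1}\mathbf 1\le{\rm card}(F)\,\|A_{F}(s_{r})^{n-1}\|\lesssim t^{\,n}
\]
for large $n$, any polynomial Jordan factor being absorbed because $t>\rho$; this is the asserted geometric decay. Summing over $n$, the tail is dominated by the convergent series $\sum_{n}t^{n}$ while the finitely many small-$n$ terms are finite sums of quantities bounded by $1$, so $\sum_{\sigma\in F^{*}}(p_{\sigma}c_{\sigma}^{r})^{s_{r}/(s_{r}+r)}\lesssim1$, which is the consequence. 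The main obstacle is the spectral estimate for the \emph{reducible} matrix $A_{F}(s_{r})$: one must identify the diagonal blocks of its Frobenius normal form with the sub-maximal components $H\notin\mathcal{M}_{r}$ and combine this with the strict monotonicity of $\Psi_{H}$; the passage to matrix powers and the geometric summation are then routine.
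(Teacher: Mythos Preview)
Your proposal is correct. Note, however, that the paper does not actually prove this lemma: it is quoted verbatim as \cite[Lemma 3.8]{KZ:15} and invoked without argument. Your write-up supplies precisely the proof one expects to find in the cited reference: rewrite the sum as $\mathbf{1}^{\top}A_{F}(s_{r})^{\,n-1}\mathbf{1}$, identify the diagonal blocks of the Frobenius normal form of $A_{F}(s_{r})$ with the submatrices $A_{H}(s_{r})$ for $H\in{\rm SC}(G)\setminus\mathcal{M}_{r}$, use the strict monotonicity of $\Psi_{H}(\cdot)$ (from $c_{ij}<1$) together with $s_{r}(H)<s_{r}$ to get $\Psi_{H}(s_{r})<1$, and conclude by Gelfand's formula. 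Each step is sound; the identification of the strongly connected components of the induced subgraph on $F$ with those $H\in{\rm SC}(G)$ contained in $F$ is justified because distinct strongly connected components of $G$ are disjoint, so any $H\in{\rm SC}(G)$ meeting $F$ lies entirely in $F$. The residual $1\times1$ zero blocks (singletons without self-loops) are handled as you note. There is nothing to compare against in the present paper beyond the citation.
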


\section{Proof of Theorem  \ref{mthm1}}
For $\gamma\in\Omega^*$, we have either $T_r(\gamma)=0$, which implies that $\gamma$ does not pass any $H\in\mathcal{M}_r$, or $T_r(\gamma)=l$ for some $1\leq l\leq T_r$. In the latter case, there exist some $H_i\in\mathcal{M}_r,1\leq i\leq l$, such that $H_1\prec H_2\prec\cdots\prec H_l$. We write
\[
\mathcal{H}_l:=\{(H_1,H_2,\cdots, H_l):H_1\prec H_2\prec\cdots\prec H_l,H_i\in\mathcal{M}_r,1\leq i\leq l\}.
\]
By the strong connectedness of $H_i\in\mathcal{M}_r$, we can see that
\begin{eqnarray}\label{s16}
0\leq
{\rm card}(\mathcal{H}_l)\leq \binom{T_r}{l}\;\;{\rm for\; all}\;\; 1\leq l\leq T_r,
\end{eqnarray}
where $\binom{T_r}{l}$ denotes the combination number of choosing $l$ objects out of $T_r$.

If a path $\gamma$ passes $H_1,\cdots, H_l\in\mathcal{M}_r$ and $T_r(\gamma)=l$, $\gamma$ takes the following form:
\begin{eqnarray}\label{s1}
\gamma=\tau_\gamma^{(0)}\ast\sigma_\gamma^{(1)}\ast\tau_\gamma^{(1)}\ast\sigma_\gamma^{(2)}\ast\cdots\ast\tau_\gamma^{(l-1)}\ast\sigma_\gamma^{(l)}\ast\tau_\gamma^{(l)},
\end{eqnarray}
where $\tau^{(i)}\in\mathcal{F}^*,0\leq i\leq l$, and $\sigma^{(i)}\in H_i\in\mathcal{M}_r,1\leq i\leq l$. Let us denote by $\Gamma(H_1,\cdots,H_l)$ the set of all such words $\gamma$, which have entries in
each of $H_1,\cdots,H_l$, but do not have entries in any other elements of $\mathcal{M}_r$. We write
\begin{eqnarray*}
\Lambda_{k,r}(H_1,\cdots,H_l):=\Lambda_{k,r}\cap\Gamma(H_1,\cdots,H_l).
\end{eqnarray*}
Then for large $k$, $\Lambda_{k,r}(H_1,\cdots,H_l)$ is non-empty. We write
\begin{equation}\label{s2}
\lambda_{k,r}((H_i)_{i=1}^l):=\sum_{\gamma\in\Lambda_{k,r}(H_1,\cdots,H_l)}(p_\gamma c_\gamma^r)^{\frac{s_r}{s_r+r}}.
\end{equation}

For the proof of Theorem \ref{mthm1}, we need to estimate the asymptotic order of $\lambda_{k,r}((H_i)_{i=1}^l)$. We divide the estimation into several lemmas. First we give an upper estimate for $\lambda_{k,r}((H_i)_{i=1}^l)$.

\begin{lemma}\label{zz4}
Let $\lambda_{k,r}((H_i)_{i=1}^l)$ be as defined in (\ref{s2}). For $2\leq l\leq T_r$, we have
\begin{equation*}
\lambda_{k,r}((H_i)_{i=1}^l)\lesssim k^{l-1}.
\end{equation*}
\end{lemma}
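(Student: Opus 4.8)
The plan is to decompose each $\gamma\in\Lambda_{k,r}(H_1,\ldots,H_l)$ according to its block structure \eqref{s1} and to sum over the blocks one factor at a time, exploiting the uniform bounds \eqref{s7}, Lemma~\ref{zz6}, and Lemma~\ref{zz7}. Write $\beta:=s_r/(s_r+r)$ for brevity. Since the map $\gamma\mapsto(\tau_\gamma^{(0)},\sigma_\gamma^{(1)},\tau_\gamma^{(1)},\ldots,\sigma_\gamma^{(l)},\tau_\gamma^{(l)})$ is injective and $(p_\gamma c_\gamma^r)^\beta$ factors as the product of the corresponding quantities over the blocks (up to a bounded multiplicative constant coming from the finitely many "transition" edges joining consecutive blocks, each of which contributes a factor in $[\underline\eta_r,1]$ raised to a bounded power), we get
\begin{equation*}
\lambda_{k,r}((H_i)_{i=1}^l)\lesssim\sum (p_{\tau^{(0)}}c_{\tau^{(0)}}^r)^\beta\prod_{i=1}^l(p_{\sigma^{(i)}}c_{\sigma^{(i)}}^r)^\beta\prod_{i=1}^l(p_{\tau^{(i)}}c_{\tau^{(i)}}^r)^\beta,
\end{equation*}
where the sum ranges over all admissible tuples of $\tau$-blocks in $F^*$ and $\sigma$-blocks in the respective $H_i$ such that the concatenation lies in $\Lambda_{k,r}$.

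The $F^*$-blocks are the easy part: by Lemma~\ref{zz6}, $\sum_{\tau\in F^*}(p_\tau c_\tau^r)^\beta\lesssim 1$, so summing out each of the $l+1$ free $\tau$-blocks costs only a bounded constant. The work is in the $H_i$-blocks. Fix the sequence of lengths and first/last vertices and sum over the $\sigma^{(i)}$ from the innermost block outward. Here is the key point: for a fixed block $H\in\mathcal M_r$, a fixed starting vertex $p\in H$, and a fixed "budget" of how small $p_{\sigma}c_{\sigma}^r$ is allowed to be, the set of admissible $\sigma\in H^*(p)$ forms (roughly) a maximal antichain in $H^*(p)$, so by \eqref{s7} its $\beta$-sum is bounded by $M_1$. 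Thus each $H_i$-block also contributes only a bounded constant — \emph{provided} we have already fixed enough data to make the remaining choice an antichain. The subtlety is that the constraint "$\gamma\in\Lambda_{k,r}$" couples the blocks: it is the total product $p_\gamma c_\gamma^r$ that must land in the window $[\underline\eta_r^k,\underline\eta_r^{k}/\ldots)$ defining $\Lambda_{k,r}$, not each block separately. To handle this I would introduce, for each $i$, the partial product up to the end of the $i$-th $H$-block, and observe that once the cut points (i.e.\ the values $\lfloor$log of the partial products$\rfloor$ in base $\underline\eta_r$) are fixed, each block ranges over a maximal antichain within its component relative to that cut, hence sums to $O(1)$ by \eqref{s7} (and Lemma~\ref{zz7} to control the dependence on entry/exit vertices). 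The number of ways to choose these $l-1$ intermediate cut points is $O(k^{l-1})$, since each cut point is an integer between $1$ and $l_{2k}=O(k)$ (Lemma~\ref{zz9}) and they are ordered. Multiplying the $O(1)$ per-block contributions by the $O(k^{l-1})$ choices of cut points gives $\lambda_{k,r}((H_i)_{i=1}^l)\lesssim k^{l-1}$, as claimed.

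The main obstacle is making precise the claim that, after fixing the intermediate cut points, the admissible $\sigma$-blocks genuinely form maximal antichains in the relevant $H^*(i)$ so that \eqref{s7} applies cleanly. One must be careful that a block "overshoots" its cut only by a bounded factor (because individual edge weights are bounded below by $\underline\eta_r$), so the relevant families are not exactly antichains but are sandwiched between two maximal antichains, which still yields the $O(1)$ bound via \eqref{s7} after absorbing a bounded number of extra edges; and one must check that the entry vertex of each block, which is determined by the exit vertex of the preceding $F^*$-block, only ranges over the finite set $H_i$, so Lemma~\ref{zz7} absorbs that dependence into the constant. Everything else — the factorization of $(p_\gamma c_\gamma^r)^\beta$ over blocks, summing out the $F^*$-blocks, and counting the cut points — is routine bookkeeping.
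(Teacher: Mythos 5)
Your proposal is correct and follows essentially the same route as the paper: decompose each word of $\Lambda_{k,r}(H_1,\dots,H_l)$ into its $F^*$- and $H_i$-blocks as in (\ref{s1}), sum out the $F^*$-blocks with Lemma~\ref{zz6}, and extract the factor $k^{l-1}$ from the $l-1$ free one-dimensional parameters describing how a total budget of size $O(k)$ is split among the $l$ components, each choice contributing $O(1)$. The only difference is bookkeeping: you parametrize by weight-level cut points and invoke the antichain bound (\ref{s7}) (suitably extended to unions of boundedly many antichains) for every block, whereas the paper parametrizes by the lengths of the first $l-1$ blocks --- each fixed length contributing a bounded matrix-power entry via Lemma~\ref{zz7} --- and reserves the antichain argument for the last block only.
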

\begin{proof}
Fix a $\gamma\in\Lambda_{k,r}(H_1,\cdots,H_l)$ of the form (\ref{s1}). We may assume that
\[
{\rm card}(H_i)=m_i,\;1\leq i\leq l.
\]
Let $g_{pq}(i)$ be the $(p,q)$-entry of the matrix $A_{H_i,s_r}^{|\sigma_\gamma^{(i)}|-1}$ when $|\sigma_\gamma^{(i)}|\geq 2$.
We have
\begin{eqnarray}
\sum_{i=1}^l|\sigma_\gamma^{(i)}|\leq l_{2k}-\sum_{i=0}^l|\gamma^{(i)}|\leq l_{2k}.
\end{eqnarray}
For each $1\leq i\leq l$, we denote by $c_i,d_i$ the first and last entry of the word $\sigma_\gamma^{(i)}$. By Lemma \ref{zz7}, for every $h\geq 2$, we have
\begin{eqnarray}\label{s15}
\sum_{|\sigma_\gamma^{(i)}|=h}(p_{\sigma_\gamma^{(i)}}c_{\sigma_\gamma^{(i)}}^r)^{\frac{s_r}{s_r+r}}=g_{c_id_i}(i)\leq \sum_{j=1}^{m_i}g_{jd_i}(i)\leq C_2.
\end{eqnarray}
If $|\sigma_\gamma^{(i)}|=1$, we have $(p_{\sigma_\gamma^{(i)}}c_{\sigma_\gamma^{(i)}}^r)^{\frac{s_r}{s_r+r}}=1$.

Now we fix $\tau^{(i)}\in\mathcal{F}^*,0\leq i\leq l$; and $c_i,d_i\in H_i,1\leq i\leq l$. We denote by
$\Lambda^{(1)}_{k,r}(H_1,\cdots,H_l)$ the set of words $\gamma$ in $\Lambda_{k,r}(H_1,\cdots,H_l)$ such that
\[
\tau_\gamma^{(i)}=\tau^{(i)},\;0\leq i\leq l;\;\;(\sigma_\gamma^{(i)})_1=c_i,\;\;(\sigma_\gamma^{(i)})_{|\sigma_\gamma^{(i)}|}=d_i,\;\;1\leq i\leq l.
\]
Let $I^{(1)}_{k,r}$ be the set of all $(\sigma^{(1)},\ldots,\sigma^{(l-1)})$ such that for some
\[
\gamma\in\Lambda^{(1)}_{k,r}(H_1,\cdots,H_l),
\]
$\sigma_\gamma^{(i)}=\sigma^{(i)},1\leq i\leq l-1$.
We further fix $(\sigma^{(1)},\ldots,\sigma^{(l-1)})\in I^{(1)}_{k,r}$, and write
\[
\Lambda^{(2)}_{k,r}(H_1,\cdots,H_l):=\{\gamma\in\Lambda^{(1)}_{k,r}(H_1,\cdots,H_l):\sigma_\gamma^{(i)}=\sigma^{(i)},1\leq i\leq l-1\}.
\]
We denote by $D_{k,r}$ the set of the corresponding $\sigma_\gamma^{(l)}$, namely,
\begin{eqnarray}\label{s9}
D_{k,r}:=\{\sigma\in H_l^*:\gamma\in\Lambda^{(2)}_{k,r}(H_1,\cdots,H_l),\sigma_\gamma^{(l)}=\sigma\}.
\end{eqnarray}
Then by the proof of Proposition 3.9 of \cite{KZ:15}, $D_{k,r}$ is contained in the union of $M_2$ finite maximal antichains in $H_l^*(d_l)$, where
\[
M_2:=\inf\left\{ h\in\mathbb{N}:(\overline{p}\,\overline{c}^{r})^h<\underline{\eta}\right\}+1.
\]
Thus, by (\ref{s7}), for $M_3:=M_1M_2$, we have
\begin{eqnarray}
&&\sum_{\gamma\in\Lambda^{(2)}_{k,r}(H_1,\cdots,H_l)}(p_\gamma c_\gamma^r)^{\frac{s_r}{s_r+r}}\nonumber\\&&\leq\prod_{i=0}^l
(p_{\tau^{(i)}}c_{\tau^{(i)}}^r)^{\frac{s_r}{s_r+r}}\prod_{i=1}^{l-1}
(p_{\sigma^{(i)}}c_{\sigma^{(i)}}^r)^{\frac{s_r}{s_r+r}}\sum_{\gamma\in D_{k,r}}(p_{\sigma_\gamma^{(l)}}c_{\sigma_\gamma^{(l)}}^r)^{\frac{s_r}{s_r+r}}\nonumber\\&&\leq M_3\prod_{i=0}^l
(p_{\tau^{(i)}}c_{\tau^{(i)}}^r)^{\frac{s_r}{s_r+r}}\prod_{i=1}^{l-1}
(p_{\sigma^{(i)}}c_{\sigma^{(i)}}^r)^{\frac{s_r}{s_r+r}}.\label{zz1}
\end{eqnarray}
Note that $1\leq|\sigma_\gamma^{(i)}|\leq l_{2k}$ for all $1\leq i\leq l$. Hence, by (\ref{s15}), we have
\begin{eqnarray*}
&&\sum_{(\sigma^{(1)},\ldots,\sigma^{(l-1)})\in I^{(1)}_{k,r}}
\prod_{i=1}^{l-1}(p_{\sigma^{(i)}}c_{\sigma^{(i)}}^r)^{\frac{s_r}{s_r+r}}\\&&\leq\prod_{i=1}^{l-1}\sum_{h=1}^{l_{2k}}\sum_{|\sigma^{(i)}|=h}(p_{\sigma^{(i)}}c_{\sigma^{(i)}}^r)^{\frac{s_r}{s_r+r}}\leq\prod_{i=1}^{l-1}\sum_{h=1}^{l_{2k}}
g^{(h)}_{c_id_i}\leq (l_{2k}\widetilde{C}_2)^{l-1},
\end{eqnarray*}
where $\widetilde{C}_2:=\max\{C_2,1\}$. Using this and (\ref{zz1}), we deduce
\begin{eqnarray}
&&\sum_{\gamma\in\Lambda^{(1)}_{k,r}(H_1,\cdots,H_l)}(p_\gamma c_\gamma^r)^{\frac{s_r}{s_r+r}}\nonumber
\\&&=\sum_{(\sigma^{(1)},\ldots,\sigma^{(l-1)})\in I^{(1)}_{k,r}}\sum_{\gamma\in\Lambda^{(2)}_{k,r}(H_1,\cdots,H_l)}(p_\gamma c_\gamma^r)^{\frac{s_r}{s_r+r}}\nonumber\\&&\leq M_3\prod_{i=0}^l
(p_{\tau^{(i)}}c_{\tau^{(i)}}^r)^{\frac{s_r}{s_r+r}}\sum_{(\sigma^{(1)},\ldots,\sigma^{(l-1)})\in I^{(1)}_{k,r}}
\prod_{i=1}^{l-1}(p_{\sigma^{(i)}}c_{\sigma^{(i)}}^r)^{\frac{s_r}{s_r+r}}
\nonumber\\&&\leq M_3 \widetilde{C}_2^{l-1}l_{2k}^{l-1}\prod_{i=0}^l
(p_{\tau^{(i)}}c_{\tau^{(i)}}^r)^{\frac{s_r}{s_r+r}}.\label{s8}
\end{eqnarray}
Let $I^{(0)}_{k,r}$ denote the set of all possible $(\tau^{(0)},\ldots,\tau^{(l)})$ such that $H_i,1\leq i\leq l$, are connected via $\tau^{(i)},0\leq i\leq l$ (cf. (\ref{s1})). Note that the number of possible choices of $(c_i,d_i),1\leq i\leq l$, is bounded from above by $N^{2l}$. Using this and (\ref{s8}), we deduce
\begin{eqnarray*}
\lambda_{k,r}((H_i)_{i=1}^l)&=&\sum_{\gamma\in\Lambda_{k,r}(H_1,\cdots,H_l)}(p_\gamma c_\gamma^r)^{\frac{s_r}{s_r+r}}\\&\leq& N^{2l}\sum_{(\tau^{(0)},\ldots,\tau^{(l)})\in I^{(0)}_{k,r}}\sum_{\gamma\in\Lambda^{(1)}_{k,r}(H_1,\cdots,H_l)}(p_\gamma c_\gamma^r)^{\frac{s_r}{s_r+r}}
\\&\leq& N^{2l}M_3\widetilde{C}_2^{l-1}\cdot l_{2k}^{l-1}\sum_{(\tau^{(0)},\ldots,\tau^{(l)})\in I^{(0)}_{k,r}}\prod_{i=0}^l
(p_{\tau^{(i)}}c_{\tau^{(i)}}^r)^{\frac{s_r}{s_r+r}}\\&\leq& N^{2l}M_3\widetilde{C}_2^{l-1}\cdot l_{2k}^{l-1}\bigg(\sum_{\gamma\in F^*}(p_\tau c_\tau^r)^{\frac{s_r}{s_r+r}}\bigg)^{l+1}.
\end{eqnarray*}
This, together with Lemmas \ref{zz6}, \ref{zz9}, implies
\begin{eqnarray*}
\lambda_{k,r}((H_i)_{i=1}^l)\lesssim l_{2k}^{l-1}\asymp k^{l-1}.
\end{eqnarray*}
\end{proof}

Next, we give a lower estimate for $\lambda_{k,r}((H_i)_{i=1}^l)$. For $2\leq l\leq T_r$, let
\begin{eqnarray}
q_1:=l_{1k}-3lN-1.\label{s5}
\end{eqnarray}
For $0\leq p_1\leq q_1$, we define $q_2:=q_1-p_1$. Then for $0\leq p_2\leq q_2$, we set $q_3:=q_2-p_2$. When $p_h,q_h$ are determined for all $1\leq h\leq i-1\leq l-1$, we set
\begin{eqnarray}
q_i:=q_{i-1}-p_{i-1}=q_1-\sum_{h=1}^{i-1}p_h.\label{s6}
\end{eqnarray}
Then $p_i$ is allowed to take values in $[0,q_i]\cap\mathbb{N}$. Note that, for every $2\leq i\leq l-1$, $q_i$ is dependent upon the choices of $p_h,1\leq h\leq i-1$.
\begin{lemma}\label{g1}
For $l\geq 2$ and large $k$, we have
\begin{equation}\label{zz2}
\sum_{p_1=0}^{q_1}\sum_{p_2=0}^{q_2}\cdots\sum_{p_{l-1}=0}^{q_{l-1}}1\gtrsim k^{l-1}.
\end{equation}
\end{lemma}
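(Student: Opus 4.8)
The statement to prove is a purely combinatorial lower bound: the number of lattice points $(p_1,\dots,p_{l-1})\in\mathbb{N}^{l-1}$ with $p_i\ge 0$ and $\sum_{h=1}^{i}p_h\le q_1$ for each $i$ (equivalently $\sum_{h=1}^{l-1}p_h\le q_1$) is $\gtrsim k^{l-1}$. Since by (\ref{s5}) and Lemma \ref{zz9} we have $q_1=l_{1k}-3lN-1\asymp k$, it suffices to show this count is bounded below by a constant multiple of $q_1^{l-1}$.

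\begin{proof}
Observe that the nested constraints $0\le p_i\le q_i$ with $q_i=q_1-\sum_{h<i}p_h$ are together equivalent to the single constraint $p_1,\dots,p_{l-1}\ge 0$ and $\sum_{h=1}^{l-1}p_h\le q_1$; indeed, if $p_h\ge 0$ for all $h$ and the partial sums never exceed $q_1$, then in particular the full sum is at most $q_1$, and conversely $\sum_{h=1}^{l-1}p_h\le q_1$ forces every partial sum $\sum_{h=1}^{i}p_h\le q_1$, i.e. $p_i\le q_i$. Hence the left-hand side of (\ref{zz2}) equals
\[
{\rm card}\Big(\big\{(p_1,\dots,p_{l-1})\in\mathbb{N}^{l-1}:\textstyle\sum_{h=1}^{l-1}p_h\le q_1\big\}\Big)=\binom{q_1+l-1}{l-1},
\]
the standard count of weak compositions of integers $0,1,\dots,q_1$ into $l-1$ nonnegative parts.

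It remains to bound $\binom{q_1+l-1}{l-1}$ from below by a constant multiple of $k^{l-1}$. Since $l$ is fixed (with $2\le l\le T_r$) and, by (\ref{s5}) together with Lemma \ref{zz9}, $q_1=l_{1k}-3lN-1\asymp k\to\infty$, we have for all large $k$ that $q_1\ge l-1$, and then
\[
\binom{q_1+l-1}{l-1}\ge\binom{q_1}{l-1}=\frac{q_1(q_1-1)\cdots(q_1-l+2)}{(l-1)!}\ge\frac{(q_1-l+2)^{l-1}}{(l-1)!}\ge\frac{(q_1/2)^{l-1}}{(l-1)!}
\]
once $q_1\ge 2(l-2)$, which again holds for large $k$. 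Combining with $q_1\gtrsim k$ gives the left-hand side of (\ref{zz2}) $\gtrsim k^{l-1}$, as desired.
\end{proof}

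The only subtlety is the first step: recognizing that the chain of dependent bounds $q_i=q_{i-1}-p_{i-1}$ collapses to the single simplex constraint $\sum p_h\le q_1$, so that the nested sum is exactly a binomial coefficient rather than something requiring induction on $l$. Once that identification is made, the estimate $\binom{q_1+l-1}{l-1}\asymp q_1^{l-1}$ for fixed $l$ is routine, and the conclusion follows from $q_1\asymp k$.
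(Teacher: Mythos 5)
Your proof is correct and follows essentially the same route as the paper: both recognize the nested sum as the stars-and-bars count of distributing at most $q_1$ units among $l-1$ slots (equivalently $q_1$ units among $l$ slots including the slack), bound it below by $\binom{q_1}{l-1}$, and conclude via $q_1=l_{1k}-3lN-1\asymp k$. Your version is in fact somewhat more careful than the paper's, since you make explicit both the collapse of the dependent constraints to the single simplex condition $\sum_h p_h\le q_1$ and the elementary estimate $\binom{q_1}{l-1}\gtrsim q_1^{l-1}$ for fixed $l$.
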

\begin{proof}
The sum on the left of (\ref{zz2}) equals a combination number. We can think of this as follows. We distribute $q_1$ objects among $l$ (not $l-1$) people, allowing that some people have no objects. The number all possible ways of such divisions is not less than $\binom{q_1}{l-1}$. Thus, by Lemma \ref{zz9}, we have
\begin{equation}\label{zz3}
\sum_{p_1=0}^{q_1}\sum_{p_2=0}^{q_2}\cdots\sum_{p_{l-1}=0}^{q_{l-1}}1\geq \binom{q_1}{l-1}\gtrsim l_{1k}^{l-1}\asymp k^{l-1}.
\end{equation}
This completes the proof of the lemma.
\end{proof}

\begin{lemma}\label{zz8}
Let $\lambda_{k,r}((H_i)_{i=1}^l)$ be as defined in (\ref{s2}). For $2\leq l\leq T_r$, we have
\begin{equation}\label{s4}
\lambda_{k,r}((H_i)_{i=1}^l)\gtrsim k^{l-1}.
\end{equation}
\end{lemma}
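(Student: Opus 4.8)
The plan is to produce enough words in $\Lambda_{k,r}(H_1,\ldots,H_l)$ to force the sum in (\ref{s2}) to be at least of order $k^{l-1}$, by exhibiting a family of words indexed by the tuples $(p_1,\ldots,p_{l-1})$ counted in Lemma~\ref{g1}. First I would fix, once and for all, a representative configuration: choose vertices realizing $H_1\prec H_2\prec\cdots\prec H_l$, fix connecting paths $\tau^{(0)},\ldots,\tau^{(l)}$ in $F^*$ (or single vertices where needed) joining the successive $H_i$, each of bounded length (at most $\sim N$), so that $\prod_{i=0}^l(p_{\tau^{(i)}}c_{\tau^{(i)}}^r)^{s_r/(s_r+r)}$ is a fixed positive constant; this is why $q_1$ in (\ref{s5}) is defined as $l_{1k}-3lN-1$, leaving room for these connectors. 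Then, inside each $H_i$, I would pick a closed loop $\ell_i$ based at the appropriate entry vertex, and take $\sigma^{(i)}_\gamma$ to be $\ell_i$ repeated $p_i$ times (with $p_l := q_l$, so the last block absorbs the leftover length and the total word lands in $\Lambda_{k,r}$). Distinct tuples $(p_1,\ldots,p_{l-1})$ give distinct words $\gamma$, so this family has cardinality $\gtrsim k^{l-1}$ by Lemma~\ref{g1}.

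The second step is to bound each term $(p_\gamma c_\gamma^r)^{s_r/(s_r+r)}$ from below by a constant independent of $k$ and of the tuple. Multiplicativity gives
\[
(p_\gamma c_\gamma^r)^{\frac{s_r}{s_r+r}}=\prod_{i=0}^l(p_{\tau^{(i)}}c_{\tau^{(i)}}^r)^{\frac{s_r}{s_r+r}}\prod_{i=1}^l(p_{\sigma^{(i)}_\gamma}c_{\sigma^{(i)}_\gamma}^r)^{\frac{s_r}{s_r+r}},
\]
and the connector product is a fixed positive constant. For the $H_i$-blocks, since $\sigma^{(i)}_\gamma$ is the $p_i$-fold concatenation of the fixed loop $\ell_i$, we have $(p_{\sigma^{(i)}_\gamma}c_{\sigma^{(i)}_\gamma}^r)^{s_r/(s_r+r)}=\big((p_{\ell_i}c_{\ell_i}^r)^{s_r/(s_r+r)}\big)^{p_i}$ up to the bounded boundary-transition factors, and here is the crucial point: because $H_i\in\mathcal{M}_r$, the spectral radius $\Psi_{H_i}(s_r)=1$, and for the loop $\ell_i$ we have $(p_{\ell_i}c_{\ell_i}^r)^{s_r/(s_r+r)}\le 1$ in general — so naively this factor decays exponentially in $p_i$. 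I would avoid that by instead choosing, for each $H_i$, a \emph{loop that realizes (or nearly realizes) equality}, using that $\Psi_{H_i}(s_r)=1$ forces some cycle in $H_i$ with per-step weight product equal to $1$; alternatively, and more robustly, rather than repeating one loop I would sum over \emph{all} words in $H_i^*$ of the prescribed length range and invoke the uniform two-sided bound (\ref{s7}) together with Lemma~\ref{zz7}, which says $\sum_{j\in H}c^{(h)}_{jp}(H)\asymp 1$ uniformly in $h$ — i.e. the total $(p\cdot c^r)^{s_r/(s_r+r)}$-mass carried by length-$h$ loops in $H_i$ is bounded below by a positive constant for every $h$.

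So the cleaner execution of step two is: for each choice of lengths $(h_1,\ldots,h_l)$ with $h_i\ge 1$ and $\sum h_i$ in the window dictated by membership in $\Lambda_{k,r}$, sum over all admissible block-words, getting a contribution $\gtrsim \prod_{i=1}^l\big(\text{const}\big)=\text{const}$ by (\ref{s7}) and Lemma~\ref{zz7}; then sum over the $(p_1,\ldots,p_{l-1})=(h_1-1,\ldots,h_{l-1}-1)$ ranges of Lemma~\ref{g1}, picking up a factor $\gtrsim k^{l-1}$. One must check that all these words are genuinely in $\Lambda_{k,r}(H_1,\ldots,H_l)$ — that they hit each $H_i$ but no other element of $\mathcal{M}_r$ (ensured by routing the connectors through $F^*$ and keeping the loops inside the fixed components), and that $p_{\gamma^-}c_{\gamma^-}^r\ge\underline{\eta}_r^k>p_\gamma c_\gamma^r$ (ensured by letting the final block $\sigma^{(l)}_\gamma$ run over a maximal antichain in $H_l^*$, exactly as in the proof of Lemma~\ref{zz4}). \textbf{The main obstacle} I anticipate is precisely this bookkeeping: making the length-accounting between $l_{1k}$, the $3lN$ budget for connectors, and the antichain-defining constraint of $\Lambda_{k,r}$ fit together so that the $k^{l-1}$ tuples of Lemma~\ref{g1} each index a nonempty, constant-weight family of legitimate words — the analytic estimates themselves are just (\ref{s7}), Lemma~\ref{zz7}, and Lemma~\ref{g1} assembled in the right order.
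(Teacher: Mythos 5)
Your ``cleaner execution'' is essentially the paper's proof: fix bounded-length connectors through $F^*$, sum over \emph{all} block-words of each prescribed length in $H_i$ using the uniform-in-$h$ lower bound of Lemma~\ref{zz7}, let the final block range over a maximal antichain in $H_l^*$ controlled by (\ref{s7}), and count the length-tuples via Lemma~\ref{g1}; you also correctly identified and discarded the flawed loop-repetition variant for exactly the right reason. The length bookkeeping you flag as the main obstacle is handled in the paper precisely as you describe, via the $3lN$ budget in (\ref{s5})--(\ref{s6}) and the requirement $q\leq l_{1k}-N-1$ guaranteeing each prefix extends to a nonempty antichain inside $\Lambda_{k,r}$.
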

\begin{proof}
Since $H_i\prec H_{i+1}$ for $1\leq i\leq l-1$, we may fix
\begin{eqnarray*}
\tau^{(i)}\in F^*\;\;{\rm with}\;\; |\tau^{(i)}|\leq N\;\;{\rm and}\;\; a_i\in H_i,\;b_i\in H_{i+1},
\end{eqnarray*}
such that $a_i\ast\tau^{(i)}\ast b_i$
is a path traveling from $H_i$ to $H_{i+1}$.

Since $H_{i+1}$ is strongly connected, we may fix words $\rho^{(i+1)}(j),j\in H_{i+1}$, such that $|\rho^{(i+1)}(j)|<N$, and $b_i\ast\rho^{(i+1)}(j)\ast j$ is a path from $b_i$ to $j$. We set
\[
\mathcal{A}_{i+1}:=\{b_i\ast\rho^{(i+1)}(j)\ast j:j\in H_{i+1}\},\;1\leq i\leq l-2.
\]

In order to show (\ref{s4}), we first construct a subset $J_{k,r}$ of $\Lambda_{k,r}(H_1,\ldots,H_l)$. We consider the set $E_q$ of words in $\Omega_q$ of the following form:
\begin{eqnarray}
&\omega=\sigma_\omega^{(1)}\ast\tau^{(1)}\ast \rho^{(2)}\ast\sigma_\omega^{(2)}\ast\tau^{(2)}\ast\rho^{(3)}\ast\sigma_\omega^{(3)}\ast\tau^{(3)}\ast\cdots\ast\sigma_\omega^{(l-1)},\label{s14}\\
&|\omega|=q,\;\;\sigma_\omega^{(i)}\in H_i^*,\;\;(\sigma_\omega^{(i)})_{|\sigma^{(i)}|}=a_i,\;\rho^{(i)}\in \mathcal{A}_i,\;\;1\leq i\leq l-1.\nonumber
\end{eqnarray}

For every $q\leq l_{1k}-N-1$ and $\omega\in E_q$, we have $p_{\omega\ast\tau^{(l-1)}} c_{\omega\ast\tau^{(l-1)}}^r\geq\underline{\eta}_r^k$; otherwise the minimal length of words in $\Lambda_{k,r}$ would be less than $l_{1k}$, which contradicts the definition of $l_{1k}$. Thus, for all $q\leq l_{1k}-N-1$ and $\omega\in E_q$, there exist a finite maximal antichain $D_q(\omega)$ in $H_l^*(b_l)$ such that $\omega\ast\tau^{(l)}\ast\sigma\in\Lambda_{k,r}$ for all $\sigma\in D_q(\omega)$. Write
\[
F_q:=\{\omega\ast\tau^{(l)}\ast\sigma\in\Lambda_{k,r}:\omega\in E_q,\sigma\in D_q(\omega)\}.
\]
Thus, for  $q\leq l_{1k}-N-1$, we have
$F_q\subset\Lambda_{k,r}(H_1,\ldots,H_l)$. Set
\[
J_{k,r}:=\bigcup_{q=3lN}^{l_{1k}-N-1}F_q.
\]
Let $I^{(2)}_{k,r}$ denote the set of all vectors $(\sigma^{(1)},\ldots,\sigma^{(l-1)})$ such that for some word $\gamma=\omega\ast\tau^{(l)}\ast\sigma\in J_{k,r}$, we have $\sigma_\omega^{(i)}=\sigma^{(i)},1\leq i\leq l-1$.

Next, we show (\ref{s4}) holds. First we note that
\[
\sum_{i=1}^{l-1}|\tau^{(i)}|+\sum_{i=1}^{l-1}\max_{\rho^{(i)}\in\mathcal{A}_i}|\rho^{(i)}|<2lN.
\]
Let $q_i,1\leq i\leq l-1$, be as defined in (\ref{s5})-(\ref{s6}). We allocate a total length $l_{1k}-3lN-1$ among $\sigma^{(i)},1\leq i\leq l$:

(i) $\sigma^{(1)}$ is allowed to starts at all $j\in H_1$ and terminates at $a_1\in H_1$; the quantity $p_1:=|\sigma^{(1)}|-N$ can take values:
$0,\ldots,q_1$;

(ii) for every $j\in H_2$, $j\ast\sigma^{(2)}$ initiates at $j$ and terminates at $a_2\in H_2$; and $p_2:=|\sigma^{(2)}|-N$ can take values£º
$0,\ldots,q_2$;

(iii) for every $j\in H_i$, $j\ast \sigma^{(i)}$ initiates at $j$ and terminates at $a_i\in H_i$; $p_i:=|\sigma^{(i)}|-N$ can have values£º
$0,\ldots,q_i,1\leq i\leq l-1$.

For $\sigma^{(i)},1\leq i\leq l-1$, satisfying (i)-(iii) and $\omega$ as defined in (\ref{s14}), we have
\begin{eqnarray*}
(l-1)N\leq |\omega|&=&\sum_{i=1}^{l-1}|\tau^{(i)}|+\sum_{i=1}^{l-1}|\rho^{(i)}|+\sum_{i=1}^{l-1}|\sigma^{(i)}|\\&\leq& 2(l-1)N+(l_{1k}-2lN-1)
\\&=& l_{1k}-2N-1.
\end{eqnarray*}
In addition, we have $\omega|_{|\omega|}=a_{l-1}$. Hence, such an $\omega$ belongs to $E_q$ with $q\leq l_{1k}-N-1$. Thus, the set of $(\sigma^{(1)},\ldots,\sigma^{(l-1)})$ for which (i)-(iii) are satisfied  is a subset of $I^{(2)}_{k,r}$. Using this fact and Lemmas \ref{g1} and \ref{zz7}, we have
\begin{eqnarray}
A_{k,r}:&=&\sum_{(\sigma^{(1)},\ldots,\sigma^{(l-1)})\in I^{(2)}_{k,r}}
\prod_{i=1}^{l-1}\sum_{j\in H_i}(p_{j\ast\sigma^{(i)}}c_{j\ast\sigma^{(i)}}^r)^{\frac{s_r}{s_r+r}}\nonumber\\&\geq&\sum_{p_1=0}^{q_1}\sum_{j\in H_1}c_{ja_1}^{(p_1+N)}(H_1)\cdots\sum_{p_{l-1}=0}^{q_{l-1}}\sum_{j\in H_{l-1}}c_{ja_{l-1}}^{(p_{l-1}+N)}(H_{l-1})
\nonumber\\&\geq& C_1^{l-1}\sum_{p_1=0}^{q_1}\sum_{p_2=0}^{q_2}\cdots\sum_{p_{l-1}=0}^{q_{l-1}}1\gtrsim  k^{l-1}.\label{s10}
\end{eqnarray}
For fixed $\omega\in E_q$ with $q\leq l_{1k}-N-1$, we have
\[
 D_q(\omega):=\{\sigma\in H^*_l(b_l):\omega\ast\tau^{(l)}\ast\sigma\in\Lambda_{k,r}\}.
 \]
Then $D_q(\omega)$ is a finite maximal antichain in $H^*_l(b_l)$.
By (\ref{s7}), we have
\begin{eqnarray}\label{s11}
\sum_{\sigma\in D_q(\omega)}(p_\sigma c_\sigma^r)^{\frac{s_r}{s_r+r}}\geq M_0.
\end{eqnarray}
Note that $|j\ast\rho^{(i+1)}(j)|\leq N$ for all $j\in H_i$ and $1\leq i\leq l$. We have
\begin{eqnarray}\label{s12}
\min_{1\leq i\leq l}(p_{\tau^{(i)}}c_{\tau^{(i)}}^r)^{\frac{s_r}{s_r+r}},\;\min_{\rho^{(i)}\in A_i}(p_{\rho^{(i)}}c_{\rho^{(i)}}^r)^{\frac{s_r}{s_r+r}}\geq\underline{\eta}_r^{\frac{Ns_r}{s_r+r}}.
\end{eqnarray}
By our construction, $\Lambda_{k,r}(H_1,\cdots,H_l)$ contains words of the following form:
\[
\sigma^{(1)}\ast\tau^{(1)}\ast \rho^{(2)}\ast\sigma^{(2)}\ast\tau^{(2)}\ast\rho^{(3)}\ast\sigma^{(3)}\ast\tau^{(3)}\ast\cdots\ast\sigma^{(l-1)}\ast\tau^{(l-1)}\ast\sigma^{(l)}.
\]
For such words $\gamma$, by (\ref{s12}), we have
\begin{eqnarray*}
(p_\gamma c_\gamma^r)^{\frac{s_r}{s_r+r}}\geq
\underline{\eta}_r^{\frac{2Nls_r}{s_r+r}}(p_{\sigma^{(l)}}c_{\sigma^{(l)}}^r)^{\frac{s_r}{s_r+r}}\prod_{i=1}^{l-1}(p_{\sigma^{(i)}}c_{\sigma^{(i)}}^r)^{\frac{s_r}{s_r+r}}.
\end{eqnarray*}
For a fixed $(\sigma^{(1)},\ldots,\sigma^{(l-1)})\in I^{(2)}_{k,r}$, we denote by $J_{k,r}(\sigma^{(1)},\ldots,\sigma^{(l-1)})$ the set of corresponding words $\gamma$ in $J_{k,r}$. Then
\begin{eqnarray*}
&&\sum_{\gamma\in J_{k,r}(\sigma^{(1)},\ldots,\sigma^{(l-1)})}(p_\gamma c_\gamma^r)^{\frac{s_r}{s_r+r}}\\&&\geq\underline{\eta}_r^{\frac{2Nls_r}{s_r+r}}\prod_{i=1}^{l-1}\sum_{j\in H_i}(p_{j\ast\sigma^{(i)}}c_{j\ast\sigma^{(i)}}^r)^{\frac{s_r}{s_r+r}}\sum_{\sigma^{(l)}\in D_q(\omega)}(p_{\sigma^{(l)}}c_{\sigma^{(l)}}^r)^{\frac{s_r}{s_r+r}}\\&&\geq M_0\underline{\eta}_r^{\frac{2Nls_r}{s_r+r}}\prod_{i=1}^{l-1}\sum_{j\in H_i}(p_{j\ast\sigma^{(i)}}c_{j\ast\sigma^{(i)}}^r)^{\frac{s_r}{s_r+r}}.
\end{eqnarray*}
Using this and (\ref{s10}), we conclude
\begin{eqnarray*}
\lambda_{k,r}((H_i)_{i=1}^l)&\geq&\sum_{\gamma\in J_{k,r}}(p_\gamma c_\gamma^r)^{\frac{s_r}{s_r+r}}\\&\geq&\sum_{(\sigma^{(1)},\ldots,\sigma^{(l-1)})\in I^{(2)}_{k,r}}\sum_{\gamma\in J_{k,r}(\sigma^{(1)},\ldots,\sigma^{(l-1)})}(p_\gamma c_\gamma^r)^{\frac{s_r}{s_r+r}}\\&\geq& M_0\underline{\eta}_r^{\frac{2Nls_r}{s_r+r}}A_{k,r}\\&\gtrsim& l_{1k}^{l-1}\asymp k^{l-1}.
\end{eqnarray*}
This completes the proof of the lemma.
\end{proof}
\begin{lemma}\label{zz5}
Let $\Lambda_{k,r}$ be as defined in (\ref{s3}). We have
\begin{equation*}
\sum_{\gamma\in\Lambda_{k,r}}(p_\gamma c_\gamma^r)^{\frac{s_r}{s_r+r}}\asymp k^{T_r-1}.
\end{equation*}
\end{lemma}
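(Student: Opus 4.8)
The plan is to decompose the sum over $\Lambda_{k,r}$ according to how many and which strongly connected components from $\mathcal{M}_r$ a given word passes through. Concretely, I would partition $\Lambda_{k,r}$ as
\[
\Lambda_{k,r}=\Lambda_{k,r}^{(0)}\cup\bigcup_{l=1}^{T_r}\;\bigcup_{(H_1,\ldots,H_l)\in\mathcal{H}_l}\Lambda_{k,r}(H_1,\ldots,H_l),
\]
where $\Lambda_{k,r}^{(0)}$ consists of words that do not pass through any $H\in\mathcal{M}_r$, so $\Lambda_{k,r}^{(0)}\subset F^*$ and by Lemma \ref{zz6} its contribution $\sum_{\gamma\in\Lambda_{k,r}^{(0)}}(p_\gamma c_\gamma^r)^{s_r/(s_r+r)}\lesssim 1$. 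For each $l$ with $1\le l\le T_r$ and each chain $(H_1,\ldots,H_l)\in\mathcal{H}_l$, the inner contribution is exactly $\lambda_{k,r}((H_i)_{i=1}^l)$. For $l=1$ this is handled separately (or absorbed): a word passing exactly one $H\in\mathcal{M}_r$ contributes $\asymp 1$ by the same kind of estimate as in \cite{KZ:15}, i.e. the $l=1$ analogue of Lemmas \ref{zz4} and \ref{zz8} gives $\lambda_{k,r}((H_1))\asymp k^{0}=1$.

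Next I would assemble the two-sided bound. For the upper bound, Lemma \ref{zz4} gives $\lambda_{k,r}((H_i)_{i=1}^l)\lesssim k^{l-1}$ for each chain, and since by \eqref{s16} the number of chains $\mathcal{H}_l$ is at most $\binom{T_r}{l}$, a constant independent of $k$, summing over all $l\le T_r$ and all chains yields
\[
\sum_{\gamma\in\Lambda_{k,r}}(p_\gamma c_\gamma^r)^{\frac{s_r}{s_r+r}}\lesssim 1+\sum_{l=1}^{T_r}\binom{T_r}{l}\,\mathrm{const}\cdot k^{l-1}\lesssim k^{T_r-1},
\]
the dominant term being $l=T_r$. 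For the lower bound, I only need a single chain realizing the maximum: by definition of $T_r$ there exists $\gamma^*\in\Omega^*$ with $T_r(\gamma^*)=T_r$, hence a chain $(H_1,\ldots,H_{T_r})\in\mathcal{H}_{T_r}$ exists and is nonempty. Applying Lemma \ref{zz8} with $l=T_r$ gives $\lambda_{k,r}((H_i)_{i=1}^{T_r})\gtrsim k^{T_r-1}$, and since all terms in the partition are nonnegative,
\[
\sum_{\gamma\in\Lambda_{k,r}}(p_\gamma c_\gamma^r)^{\frac{s_r}{s_r+r}}\ge\lambda_{k,r}((H_i)_{i=1}^{T_r})\gtrsim k^{T_r-1}.
\]
Combining the two estimates gives the claimed $\asymp k^{T_r-1}$.

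The only genuinely delicate point is making sure the partition above is exhaustive and that the $l=1$ (and the degenerate $l=0$) terms are correctly accounted for; the substantive work — the matching upper and lower bounds on $\lambda_{k,r}((H_i)_{i=1}^l)$ — has already been done in Lemmas \ref{zz4} and \ref{zz8}, so this lemma is essentially a bookkeeping step that organizes those estimates. I expect no real obstacle here beyond verifying that every $\gamma\in\Lambda_{k,r}$ falls into exactly one class of the partition (which follows from the structural form \eqref{s1} of a path and the definition of $T_r(\gamma)$) and that the finitely many constants $\binom{T_r}{l}$ do not interfere with the $\asymp$ relation.
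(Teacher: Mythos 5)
Your proposal is correct and follows essentially the same route as the paper: decompose $\Lambda_{k,r}$ by the value of $T_r(\gamma)$ and by the chain $(H_1,\ldots,H_l)\in\mathcal{H}_l$, bound the $l=0,1$ contributions by a constant (the paper cites Lemma \ref{zz6} together with Proposition 3.9 of \cite{KZ:15} for this), and then invoke Lemmas \ref{zz4} and \ref{zz8} with the finiteness of $\mathcal{H}_l$ from (\ref{s16}). Your lower bound using only the single maximal chain $l=T_r$ is a minor streamlining of the paper's sum over $l=2,\ldots,T_r$, but it is not a genuinely different argument.
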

\begin{proof}
For $0\leq l\leq T_r$, we write
\[
\Lambda_{k,r}(l):=\{\gamma\in\Lambda_{k,r}:T_r(\gamma)=l\}.
\]
Then $\Lambda_{k,r}(0)$ is a subset of $F^*$. We have
\[
\Lambda_{k,r}(l)=\bigcup_{(H_1,\ldots,H_l)\in\mathcal{H}_l}\Lambda_{k,r}(H_1,\ldots,H_l).
\]
By Lemma \ref{zz6} and  Proposition 3.9 of \cite{KZ:15}, there exists a constant $M_4$ which is independent of $k$, such that
\begin{eqnarray*}
\sum_{\gamma\in\Lambda_{k,r}(0)}(p_\gamma c_\gamma^r)^{\frac{s_r}{s_r+r}}+\sum_{\gamma\in\Lambda_{k,r}(1)}(p_\gamma c_\gamma^r)^{\frac{s_r}{s_r+r}}\leq M_4.
\end{eqnarray*}
This, together with (\ref{s16}) and Lemmas \ref{zz4}, \ref{zz9}, yields
\begin{eqnarray*}
\sum_{\gamma\in\Lambda_{k,r}}(p_\gamma c_\gamma^r)^{\frac{s_r}{s_r+r}}&=&M_4+\sum_{l=2}^{T_r}\sum_{\gamma\in\Lambda_{k,r}(l)}(p_\gamma c_\gamma^r)^{\frac{s_r}{s_r+r}}\\&\lesssim&\sum_{l=2}^{T_r}\binom{T_r}{l} l_{2k}^{l-1}\lesssim l_{2k}^{T_r-1}\asymp k^{T_r-1}.
\end{eqnarray*}
On the other hand, by Lemmas \ref{zz8}, \ref{zz9}, we have
\begin{eqnarray*}
\sum_{\gamma\in\Lambda_{k,r}}(p_\gamma c_\gamma^r)^{\frac{s_r}{s_r+r}}&=&\sum_{l=0}^{T_r}\sum_{\gamma\in\Lambda_{k,r}(l)}(p_\gamma c_\gamma^r)^{\frac{s_r}{s_r+r}}\gtrsim \sum_{l=2}^{T_r}l_{1k}^{l-1}\asymp k^{T_r-1}.
\end{eqnarray*}
This completes the proof of the lemma.
\end{proof}

With the above preparations, we are now able to prove our main result.

\emph{Proof of Theorem \ref{mthm1}}

By (\ref{s3}) and Lemmas \ref{zz5}, \ref{zz9}, we have
\[
\phi_{k,r}\underline{\eta}_r^{\frac{ks_r}{s_r+r}}\asymp\sum_{\gamma\in\Lambda_{k,r}}(p_\gamma c_\gamma^r)^{\frac{s_r}{s_r+r}}\asymp k^{T_r-1}\asymp (\log\phi_{k,r})^{T_r-1}.
\]
It follows that $\underline{\eta}_r^{\frac{kr}{s_r+r}}\asymp\phi_{k,r}^{-\frac{r}{s_r}}(\log\phi_{k,r})^{\frac{r(T_r-1)}{s_r}}$. This, together with (\ref{characterization}), implies
\begin{eqnarray*}
e_{\phi_{k,r},r}^{r}(\mu)&\asymp&\sum_{\sigma\in\Lambda_{k,r}}p_{\sigma}c_{\sigma}^{r}=\sum_{\sigma\in\Lambda_{k,r}}(p_{\sigma}c_{\sigma}^{r})^{\frac{s_r}{s_r+r}}(p_{\sigma}c_{\sigma}^{r})^{\frac{r}{s_r+r}}\\
&\leq&\sum_{\sigma\in\Lambda_{k,r}}(p_{\sigma}c_{\sigma}^{r})^{\frac{s_r}{s_r+r}}\underline{\eta}_r^{\frac{kr}{s_r+r}}\\&\asymp&\phi_{k,r}^{-\frac{r}{s_r}}(\log\phi_{k,r})^{(T_r-1)(1+\frac{r}{s_r})}.
\end{eqnarray*}
By H\"{o}lder's inequality with exponent less than one, we have
\begin{eqnarray*}
e_{\phi_{k,r},r}^{r}(\mu)
&\geq&\bigg(\sum_{\sigma\in\Lambda_{k,r}}(p_{\sigma}c_{\sigma}^{r})^{\frac{s_r}{s_r+r}}\bigg)^{\frac{s_r+r}{s_r}}\phi_{k,r}^{-\frac{r}{s_r}}\asymp\phi_{k,r}^{-\frac{r}{s_r}}(\log\phi_{k,r})^{(T_r-1)(1+\frac{r}{s_r})}.
\end{eqnarray*}
For $n\geq\phi_{1,r}$, there exists a unique $k$ such that $\phi_{k,r}\leq n<\phi_{k+1,r}$. As is showed in the proof of Lemma 2.1 of \cite{KZ:15}, we have $\phi_{k,r}\asymp\phi_{k+1,r}$. Thus,
\begin{eqnarray*}
e_{n,r}^{r}(\mu)\left\{\begin{array}{ll}\leq e_{\phi_{k,r},r}^{r}(\mu)
\lesssim
n^{-\frac{r}{s_r}}(\log n)^{(T_r-1)(1+\frac{r}{s_r})}\\
\geq e_{\phi_{k+1,r},r}^{r}(\mu)
\gtrsim n^{-\frac{r}{s_r}}(\log n)^{(T_r-1)(1+\frac{r}{s_r})}\end{array}\right..
\end{eqnarray*}
This completes the proof of the theorem.

\end{document}